\numberwithin{equation}{section}
\newtheorem{theorem}{Theorem}[section]
\newtheorem{prop}[theorem]{Proposition}
\newtheorem{lem}[theorem]{Lemma}
\newtheorem{conj}[theorem]{Conjecture}
\theoremstyle{remark}
\newtheorem{rem}[theorem]{Remark}
\theoremstyle{definition}
\newtheorem{problem}[theorem]{Problem}
\newtheorem{definition}[theorem]{Definition}
\newcommand{\Z}{\mathbb{Z}}
\newcommand{\R}{\mathbb{R}}
\newcommand{\F}{\mathbb{F}}
\newcommand{\C}{\mathbb{C}}
\newcommand{\Q}{\mathbb{Q}}
\newcommand{\PP}{\mathbb{P}}
\newcommand{\spp}{\mathrm{supp}}
\newcommand{\height}{\mathrm{ht}}
\newcommand{\dg}{\mathrm{deg}}
\newcommand{\vol}{\mathrm{Vol}}
\newcommand{\wt}{\mathrm{wt}}
\newcommand{\con}{\mathrm{conv}}
\newcommand{\xb}{{\bf x}}
\newcommand{\yb}{{\bf y}}
\newcommand{\eb}{{\bf e}}
\begin{document}
\title{Lattice simplices of maximal dimension \\
with a given degree}

\author{Akihiro Higashitani}
\thanks{
{\bf 2010 Mathematics Subject Classification:} Primary 52B20; Secondary 14M25, 94B05. \\
\;\;\;\; {\bf Keywords:}
lattice simplex, degree, Cayley decomposition, binary linear code.\\
The author is partially supported by JSPS Grant-in-Aid for Young Scientists (B) $\sharp$17K14177.} 
\address{Akihiro Higashitani,
Department of Mathematics, Kyoto Sangyo University, 
Motoyama, Kamigamo, Kita-Ku, Kyoto, Japan, 603-8555}
\email{ahigashi@cc.kyoto-su.ac.jp}

\begin{abstract}It was proved by Nill that for any lattice simplex of dimension $d$ with degree $s$ 
which is not a lattice pyramid, the inequality $d+1 \leq 4s-1$ holds. 
In this paper, we give a complete characterization of lattice simplices satisfying the equality, i.e., 
the lattice simplices of dimension $(4s-2)$ with degree $s$ which are not lattice pyramids. 
It turns out that such simplices arise from binary simplex codes. 
As an application of this characterization, we show that such simplices are counterexamples for the conjecture known as ``Cayley conjecture''. 
Moreover, by modifying Nill's inequaitly slightly, we also see the sharper bound $d+1 \leq f(2s)$, 
where $f(M)=\sum_{n=0}^{\lfloor \log_2 M \rfloor} \lfloor M/2^n \rfloor$ for $M \in \Z_{\geq 0}$. 
We also observe that any lattice simplex attaining this sharper bound always comes from a binary code. 
\end{abstract}

\maketitle

\section{Introduction}\label{sec:in}

\subsection{Terminologies}
We say that a convex polytope $P \subset \R^d$ is a {\em lattice polytope} 
if all of its vertices belong to the standard lattice $\Z^d$. 
For two lattice polytopes $P, P' \subset \R^d$, we say that $P$ and $P'$ are {\em unimodularly equivalent} 
if there exist $f \in \mathrm{GL}_d(\Z)$ and ${\bf u} \in \Z^d$ such that $P'=f(P)+{\bf u}$. 
One of the main topics of the study on lattice polytopes is to give a classification of lattice polytopes up to unimodular equivalence.

For a lattice polytope $P \subset \R^d$ of dimension $d$, 
we consider the generating function $\sum_{n \geq 0}|n P \cap \Z^d|t^n$, called the {\em Ehrhart series}. 
It is well known that Ehrhart series becomes a rational function which is of the form 
$$\sum_{n \geq 0}|n P \cap \Z^d|t^n=\frac{h^*_P(t)}{(1-t)^{d+1}},$$
where $h^*_P(t)$ is a polynomial in $t$ with integer coefficients. 
The polynomial $h^*_P(t)$ appearing in the numerator of Ehrhart series is called the {\em $h^*$-polynomial} of $P$. 
Let $\dg(P)$ denote the degree of the $h^*$-polynomial of $P$. 
It is known that $$\dg(P)=d+1-\min\{m : mP^\circ \cap \Z^d \not= \emptyset\},$$ 
where $P^\circ$ denotes the interior of $P$. In particular, $\dg(P) \leq d$. 
Moreover, $h^*_P(1)/d!$ coincides with the volume of $P$, so using the notation $\vol(P)=h^*_P(1)$ is natural. 
We refer the reader to \cite{BR} for more detailed information on Ehrhart series or $h^*$-polynomials.

For a lattice polytope $P \subset \R^d$, let 
$$\mathrm{Pyr}(P)=\mathrm{conv}(\{(\alpha,0) \in \R^{d+1} : \alpha \in P\} \cup \{(0,\ldots,0,1)\}) \subset \R^{d+1}.$$
This new lattice polytope is said to be a {\em lattice pyramid} over $P$. 
It is not so hard to see that $h_P^*(t)=h_{\mathrm{Pyr}(P)}^*(t)$ (\cite[Theorem 2.4]{BR}). 
In particular, $\dg(P)=\dg(\mathrm{Pyr}(P))$.

\subsection{Main Results}
The following is one of the most interesting open problems in the theory of lattice polytopes:
\begin{problem}\label{mondai}
Given a nonnegative integer $s$, classify all lattice polytopes with degree $s$ 
which are not lattice pyramids over lower-dimensional ones up to unimodular equivalence. 
\end{problem}

Let $\eb_i$ denote the $i$th unit vector of $\R^d$ and ${\bf 0}$ its origin. 
Then any lattice polytope of dimension $d$ with degree 0 is unimodularly equivalent to 
the $d$-folded lattice pyramids over one lattice point $\{{\bf 0}\}$ ($0$-dimensional lattice polytope), 
i.e., $\con(\{{\bf 0}, \eb_1,\ldots,\eb_d\}) \subset \R^d$. 
Moreover, Batyrev and Nill completely solve Problem \ref{mondai} for the case $s=1$ (\cite{BN}). 

On the other hand, Nill proved the following: 
\begin{theorem}[{\cite[Theorem 7]{N08}}]\label{nill}
Let $c$ and $s$ be nonnegative integers. For a lattice polytope $P$ of dimension $d$ having at most $c+d+1$ vertices 
with $\dg(P)\leq s$, if $P$ is not a lattice pyramid over a lower-dimensional one, 
then $d+1 \leq c(2s+1) + 4s-1$ holds. In particular, when $P$ is a simplex (i.e. $c=0$), we have $d \leq 4s-2$. 
\end{theorem}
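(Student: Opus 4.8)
The plan is to reduce everything to the essential case of a simplex ($c=0$), to prove that case by an exact averaging identity over a finite abelian group, and then to indicate how the $c$ extra vertices are absorbed.

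For the simplex case, let $P=\con(\{v_0,\ldots,v_d\})$ and lift each vertex to $\tilde v_i=(v_i,1)\in\Z^{d+1}$. Setting $\Lambda=\bigoplus_{i=0}^{d}\Z\tilde v_i$ and $G=\Z^{d+1}/\Lambda$, each barycentric coordinate descends to a homomorphism $\phi_i\colon G\to\Q/\Z$; writing $\lambda_i(g)\in[0,1)$ for the representative of $\phi_i(g)$, the lattice points of the half-open fundamental parallelepiped of the cone over $P$ are indexed by $g\in G$, the point attached to $g$ having last coordinate $\height(g):=\sum_{i=0}^{d}\lambda_i(g)\in\Z_{\ge0}$. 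The standard description of the $h^*$-polynomial of a lattice simplex (see \cite{BR}) then reads
\[
h^*_P(t)=\sum_{g\in G}t^{\height(g)} .
\]
Two facts drop out of this dictionary and are exactly what I need: $P$ is a lattice pyramid precisely when some $\phi_i$ is identically zero, and $\dg(P)=s$ says that $\height(g)\le s$ for all $g\in G$, while $\height(0)=0$.

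Now assume $P$ is not a pyramid, so every $\phi_i$ is nonzero and its image is cyclic of some order $m_i\ge2$. As $g$ runs over $G$ the value $\lambda_i(g)$ sweeps out $\{0,1/m_i,\ldots,(m_i-1)/m_i\}$, each value being attained $|G|/m_i$ times, so
\[
\sum_{g\in G}\lambda_i(g)=\frac{|G|}{m_i}\sum_{k=0}^{m_i-1}\frac{k}{m_i}=\frac{|G|\,(m_i-1)}{2m_i}\ \ge\ \frac{|G|}{4},
\]
the last inequality using only $m_i\ge2$. Summing over the $d+1$ coordinates gives $\sum_{g\in G}\height(g)\ge(d+1)|G|/4$. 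On the other hand $\height(0)=0$ and $\height(g)\le s$ otherwise, so $\sum_{g\in G}\height(g)\le s(|G|-1)$. Comparing the two estimates and dividing by $|G|$ yields $(d+1)/4\le s(1-1/|G|)<s$, that is $d+1<4s$, hence $d+1\le4s-1$ and $d\le4s-2$. This settles the case $c=0$.

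For general $c$ the difficulty, which I expect to be the main obstacle, is that a non-simplicial $P$ carries no finite group $G$ encoding $h^*_P$, so the averaging identity is unavailable and the $n\le c+d+1$ vertices must be controlled by hand. My plan is to pass to the lowest interior lattice point: since $\min\{m:mP^\circ\cap\Z^d\ne\emptyset\}=d+1-s=:k$, choose $w$ in the relative interior of $kP$, so that $w=\sum_{i=1}^{n}\lambda_i v_i$ with all $\lambda_i>0$ and $\sum_i\lambda_i=k$. Rounding these coefficients down produces a lattice point whose barycentric coordinates lie in $[0,1)$, and comparing it with its reflection at level $k$ plays the role of the pairing $g\leftrightarrow-g$ used above. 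I would then try to isolate a $d$-dimensional sub-simplex on $d+1$ of the vertices, governed by the simplex bound, while arguing that each of the remaining at most $c$ vertices can raise the admissible dimension by no more than $2s+1$; this is precisely what would produce the term $c(2s+1)$. Controlling the degree along such a reduction---so that the simplex estimate remains applicable and each extra vertex is correctly charged $2s+1$---is where the real work lies, and an alternative route would be to exhibit $P$ as a Cayley polytope and to induct on the number of Cayley factors.
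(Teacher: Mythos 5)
Your argument for the simplex case is correct and complete, but it follows a genuinely different route from the paper's. You average the coordinate homomorphisms: since $P$ is not a pyramid, each $\phi_i\colon G\to\Q/\Z$ is nonzero, so its values over $G$ average to $\frac{m_i-1}{2m_i}\ge\frac{1}{4}$, and comparing $\sum_{g\in G}\height(g)\ge (d+1)|G|/4$ with the trivial bound $\sum_{g\in G}\height(g)\le s(|G|-1)$ forces $d+1<4s$. The paper (following Nill) instead works with supports of group elements: Lemma \ref{lem1} gives $\wt(\xb)\le 2s$ via the pairing of $\xb$ with $-\xb$, and Proposition \ref{Higashitani} (a) runs a greedy covering argument, choosing $\xb_1,\xb_2,\ldots$ with each new support block $I_j$ of maximal size and showing $|I_j|\le\lfloor|I_1|/2^{j-1}\rfloor$ by considering $\xb_{j-1}+\xb_j$. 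The support route is longer but buys more: the sharper bound $d+1\le f(2s)\le 4s-1$, part (b) (equality in $4s-1$ forces $s$ to be a power of $2$), and part (c) (a covering of $\spp(\Lambda_\Delta)$ by $\lfloor\log_2 2s\rfloor+1$ supports), which are precisely the tools the paper needs later for Theorems \ref{main1} and \ref{main2}. Your averaging argument is shorter and self-contained, but yields only the $4s-1$ bound and none of that structure. (Your pyramid criterion, that $P$ is a lattice pyramid if and only if some $\phi_i\equiv 0$, is correct for simplices, and you only use the easy direction of it.)

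The genuine gap is the case $c>0$, which is part of the statement as quoted. What you give there is a plan with the hard step left open, as you yourself acknowledge. Concretely, the reduction ``isolate a sub-simplex on $d+1$ of the vertices and apply the simplex bound'' does not go through as stated: by monotonicity of $h^*$-polynomials a subpolytope $Q\subset P$ does satisfy $\dg(Q)\le s$, but $Q$ may well be a lattice pyramid even though $P$ is not, so the simplex bound cannot be applied to it; making precise the charge of $2s+1$ per extra vertex is exactly the content of Nill's proof and is absent here. In fairness, the paper does not prove the general case either: Theorem \ref{nill} is quoted from \cite{N08}, and the machinery the paper develops (Lemma \ref{lem1} and Proposition \ref{Higashitani}) covers only the simplex case, which is all that is used afterwards. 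So measured against what the paper actually proves, you have covered the same ground by a different and correct method; measured against the full statement, the $c>0$ part is missing.
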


In this paper, we give a complete characterization of lattice simplices of maximal dimension for a given degree $s$, 
i.e., of dimension $(4s-2)$ with degree $s$, which are not lattice pyramids up to unimodular equivalence. 
\begin{theorem}[Main Result 1]\label{main1}
Given a positive integer $s$, let $\Delta$ be a lattice simplex of dimension $(4s-2)$ with degree $s$ which is not a lattice pyramid. 
Then $s=2^r$ for some $r \in \Z_{\geq 0}$. Moreover, $\Delta$ is uniquely determined by $r$ up to unimodular equivalence and 
$\Delta$ arises from the $(r+2)$-dimensional binary simplex code. 
More precisely, $\Delta$ is unimodularly equivalent to $\Delta(r+2)$. 
\end{theorem}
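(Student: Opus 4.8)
The plan is to translate the geometry of the extremal simplex into the combinatorics of binary codes via the weight/quotient data encoded in Nill's inequality. A lattice simplex $\Delta$ of dimension $d$ is determined up to unimodular equivalence by the finite abelian group $G = \Z^{d+1}/\Lambda$, where $\Lambda$ is the lattice generated by the edge vectors emanating from a vertex together with the affine relation among the vertices; equivalently, $\Delta$ corresponds to a subgroup of $(\Q/\Z)^{d+1}$, or after clearing denominators, to a collection of $d+1$ elements of a finite abelian group summing to zero. I would first record the standard dictionary: the degree $\dg(\Delta)$ and the condition ``not a lattice pyramid'' have clean group-theoretic descriptions. Concretely, the interior lattice points of dilates of $\Delta$ correspond to elements $g \in G$ all of whose ``coordinates'' (barycentric-type weights) are strictly positive, and the minimal such dilate governs the degree; being a pyramid corresponds to one of the defining group elements being zero. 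The key numeric input is that equality $d+1 = 4s-1$ in Theorem \ref{nill} forces the inequalities in Nill's counting argument to be tight at every step.

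Next I would analyze where Nill's proof of $d+1 \le 4s-1$ loses slack and impose equality throughout. His bound comes from summing, over a suitable element $g \in G$ realizing the degree, a local contribution from each coordinate; the bound $4s-1$ arises because each coordinate contributes according to its weight, and the weights are constrained by a sum-to-a-fixed-value condition coming from $\dg(\Delta) = s$. Tightness should force all the weights into a rigid configuration. I expect this to pin down the ``weight vector'' of the generator $g$ to be (up to symmetry) a very specific integer vector, and to force the underlying group $G$ to be elementary abelian $2$-group, $G \cong (\Z/2\Z)^k$ for some $k$. This is the step where $s = 2^r$ must emerge: the total weight condition combined with the rigidity will only admit solutions when $s$ is a power of $2$, and the relation $d + 1 = 4s - 1 = 2^{r+2}-1$ is exactly the length of the binary simplex code of dimension $r+2$, which is the number of nonzero vectors in $\F_2^{r+2}$.

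The heart of the argument, which I anticipate being the main obstacle, is to show that tightness does not merely constrain the numerical invariants but actually determines $\Delta$ uniquely up to unimodular equivalence, and to identify that unique simplex with the one built from the binary simplex code $\Delta(r+2)$. For this I would argue that once $G \cong (\Z/2\Z)^{r+2}$ (or the appropriate quotient) and the weight configuration are fixed, the $d+1$ defining group elements must be exactly the $2^{r+2}-1$ nonzero elements of $\F_2^{r+2}$, each appearing once: the extremality forces maximal symmetry, leaving no freedom. Verifying that this collection indeed (i) sums to zero in $G$, (ii) yields a simplex of the correct degree $s = 2^r$, and (iii) is not a lattice pyramid amounts to an intrinsic computation with the simplex code, whose codewords all have the same weight. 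Conversely I would check that $\Delta(r+2)$ genuinely attains dimension $4s-2$ with degree $s$, closing the loop. The subtle point is the uniqueness: ruling out any other tight configuration requires showing the equality conditions in Nill's argument propagate simultaneously across all coordinates, which I would handle by a careful case analysis on the possible weights, using that any deviation from the simplex-code pattern strictly decreases the dimension for fixed degree.
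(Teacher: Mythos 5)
Your overall frame --- the dictionary between lattice simplices and finite abelian group data, imposing tightness in Nill's counting argument, and guessing that the extremal configuration is the binary simplex code --- is the same as the paper's strategy, but the three places where you write ``I expect,'' ``extremality forces maximal symmetry,'' and ``careful case analysis'' are exactly where the mathematical content lies, and your proposed mechanism for the last (and hardest) of them would fail. First, the elementary abelian $2$-group structure is not automatic from tightness of the numerical inequality: the paper needs a separate lemma (Lemma \ref{1/2}) showing that if $\xb,\xb'$ both have weight $2s$ and $|\spp(\xb)\cap\spp(\xb')|=s$, then all their entries lie in $\{0,1/2\}$; this is proved by playing $\xb+\xb'$, $2\xb+\xb'$ and $3\xb+\xb'$ against the weight bound $\wt \leq 2s$, not by inspecting equality cases of the dimension count. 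Second, to see that every nonzero binary pattern in $\{0,1\}^{r+2}$ occurs among the coordinates, the paper proves that each set $A_J$ (coordinates lying in exactly the supports indexed by $J$) is non-empty, via the substitution replacing $\xb_i$ by $\xb_{j_1}+\xb_i$ for $i \in J$; tightness alone only pins down the block sizes $|I_j| = \lfloor 2s/2^{j-1} \rfloor$, not this interlocking pattern.

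The genuinely fatal gap is uniqueness. Even once the $r+2$ covering elements are known to generate a subgroup $\Lambda \cong B(r+2)$ realizing the simplex-code pattern on all $d+1=4s-1$ coordinates, it remains possible a priori that $\Lambda_\Delta \supsetneq \Lambda$ --- dually, in your quotient-group language, that the group $G$ is a nontrivial extension of $(\Z/2\Z)^{r+2}$ while the $d+1$ distinguished elements still map bijectively onto the nonzero vectors of the quotient $(\Z/2\Z)^{r+2}$. Such a configuration has the \emph{same} dimension $4s-2$ and the same mod-$2$ pattern, so your proposed criterion (``any deviation from the simplex-code pattern strictly decreases the dimension for fixed degree'') cannot detect it; what must be shown is that its degree strictly exceeds $s$. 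The paper does this by a parity double count: any $\xb' \in \Lambda_\Delta \setminus \Lambda$ is first forced, by rerunning the key lemma with $\xb'$ swapped in for one generator, to satisfy $\wt(\xb')=2s$ and $\xb' \in \{0,1/2\}^{d+1}$, whence $|\spp(\xb)\cap\spp(\xb')|=s$ for every $\xb \in \Lambda \setminus \{{\bf 0}\}$, giving $\sum_{\xb \in \Lambda \setminus \{{\bf 0}\}} |\spp(\xb)\cap\spp(\xb')| = s(4s-1)$; on the other hand, each coordinate in $\spp(\xb')$ lies in the support of exactly $2s$ nonzero elements of $\Lambda$, so this sum must be divisible by $2s$ --- a contradiction, since $s(4s-1)$ is not. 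Without an argument of this kind (or an equivalent character-theoretic one), your proof establishes at most that $\Delta(r+2)$ is \emph{a} simplex attaining the bound and that the coordinate pattern is the simplex code, not that every extremal $\Delta$ is unimodularly equivalent to $\Delta(r+2)$.
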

We will explain the binary simplex codes and clarify the lattice simplex $\Delta(r+2)$ arising from a binary simplex code in Section \ref{sec:code}.

Furthermore, by modifying Theorem \ref{nill}, we obtain the following (see Proposition \ref{Higashitani}): 
For a lattice simplex of dimension $d$ with degree $s$ which is not a lattice pyramid, we have the inequality $d+1 \leq f(2s)$, where 
$\displaystyle f(M):=\sum_{n = 0}^\infty \left\lfloor \frac{M}{2^n} \right\rfloor
=\sum_{n=0}^{\lfloor \log_2 M \rfloor} \left\lfloor \frac{M}{2^n} \right\rfloor$ 
for $M \in \Z_{\geq 0}$. Note that $f(M) \leq 2M-1$ holds in general and the bound $f(2s)$ is sharp. 
As the second main result of this paper, we will observe that lattice simplices of dimension $d$ with degree $s$ which are not lattice pyramids 
satisfying $d+1=f(2s)$ have the special property as follows. 
\begin{theorem}[Main Result 2]\label{main2}
Given a positive integer $s$, let $\Delta$ be a lattice simplex of dimension $(f(2s)-1)$ with degree $s$ which is not a lattice pyramid. 
Then $\Delta$ arises from a binary code. More precisely, we have $\Lambda_\Delta \subset \{0,1/2\}^{f(2s)}$.
\end{theorem}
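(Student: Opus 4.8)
The plan is to pass from $\Delta$ to the finite abelian group attached to it and to reformulate the statement as a purely group-theoretic extremal problem. Writing $v_0,\dots,v_d$ for the vertices and $\widehat v_i=(v_i,1)\in\Z^{d+1}$, the group $G:=\Z^{d+1}/(\Z\widehat v_0+\dots+\Z\widehat v_d)$ represents each of its elements uniquely by a rational point $\sum_i\lambda_i\widehat v_i$ with $\lambda_i\in[0,1)$, and $\Lambda_\Delta$ is exactly the set of coordinate vectors $(\lambda_0,\dots,\lambda_d)$ so obtained, a finite subgroup of $(\Q/\Z)^{d+1}$. Three facts are in force: every element has integral weight $\sum_i\lambda_i\in\Z$ (this weight is the last coordinate of the corresponding lattice point); the degree hypothesis reads $\max_{g}\sum_i\lambda_i=s$; and, because $\Delta$ is not a lattice pyramid, no coordinate is identically $0$ on $\Lambda_\Delta$, so each coordinate projection $\Lambda_\Delta\to\Q/\Z$ is onto a cyclic group of order $m_i\ge 2$. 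The desired conclusion $\Lambda_\Delta\subset\{0,1/2\}^{d+1}$ is then \emph{equivalent} to $m_i=2$ for all $i$, i.e. to $2\Lambda_\Delta=0$, since the exponent of $\Lambda_\Delta$ is $\mathrm{lcm}_i\,m_i$. Thus the whole task is: assuming $d+1=f(2s)$, show that $\Lambda_\Delta$ is $2$-torsion.

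My strategy is to re-run the proof of the inequality $d+1\le f(2s)$ (Proposition \ref{Higashitani}) and read off the equality case, the point being that the bound is built from the recursion $f(2s)=2s+f(s)$, i.e. from peeling off the summands $\lfloor 2s/2^n\rfloor$ one power of two at a time. It is clarifying to dualize: the coordinate projections are characters $\chi_0,\dots,\chi_d\in\widehat{\Lambda_\Delta}$, each nonzero (full support), with $\sum_i\chi_i=0$ (integral weight), and the degree bound becomes the concentration condition $\sum_i\{\chi_i(g)\}\le s$ for every $g\in\Lambda_\Delta$, where $\{\cdot\}$ denotes the representative in $[0,1)$. When $\widehat{\Lambda_\Delta}$ has exponent $2$ this is precisely the statement that the $\chi_i$ form a binary linear code (columns of a generator matrix) in which every hyperplane misses at most $2s$ columns, and the maximum length of such a code is $f(2s)$; the general case is the same concentration problem over an arbitrary finite abelian group. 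The plan is therefore to induct on the $2$-adic filtration $\Lambda_\Delta\supseteq 2\Lambda_\Delta\supseteq 4\Lambda_\Delta\supseteq\cdots$: each successive quotient is an $\F_2$-vector space contributing one binary ``layer'', the $n$-th layer is responsible for the term $\lfloor 2s/2^n\rfloor$ of $f(2s)$, and equality $d+1=f(2s)$ forces every layer to be simultaneously of maximal length. Tracking these tightness conditions, the aim is to conclude that only the bottom (order-$2$) layer can be nonempty, which is exactly $2\Lambda_\Delta=0$.

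The main obstacle is the final step: showing that \emph{any} coordinate of order $\ge 3$ strictly lowers the attainable dimension below $f(2s)$. This splits into ruling out odd torsion and ruling out elements of order divisible by $4$, and in the character formulation it amounts to proving that replacing $\F_2$ by any larger cyclic group in a layer wastes weight budget, so that the layerwise maxima cannot all be achieved at once. Making this precise is the delicate combinatorial heart of the argument: one must account exactly for how the constraint $\sum_i\{\chi_i(g)\}\le s$ is consumed by nonbinary coordinates and verify that each such coordinate costs at least one unit of dimension against $f(2s)$. I expect this accounting to be where essentially all the work lies, and I anticipate it mirrors the rigidity already visible in Theorem \ref{main1}, where in the power-of-two case $s=2^r$ the equality $d+1=4s-1=f(2s)$ additionally forces all nonzero weights to be equal, pinning $\Lambda_\Delta$ down to the binary simplex code; here the weaker conclusion ``$2\Lambda_\Delta=0$'' should follow from the same tightness propagation without that extra constant-weight rigidity.
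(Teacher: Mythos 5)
Your reduction is fine as far as it goes: identifying $\Lambda_\Delta\subset(\R/\Z)^{d+1}$ with the group of the paper, noting that ``not a pyramid'' means every coordinate projection is nontrivial, and observing that the conclusion $\Lambda_\Delta\subset\{0,1/2\}^{d+1}$ is equivalent to $2\Lambda_\Delta=0$ are all correct. But this is only a restatement of the theorem, and your proposal explicitly stops at the point where the proof has to begin: you yourself write that the ``delicate combinatorial heart'' --- showing that any coordinate of order $\geq 3$ (odd torsion, or order divisible by $4$) forces $d+1<f(2s)$ --- is left as an expectation. That step \emph{is} the theorem, so as it stands the proposal has a genuine gap rather than a proof.

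Moreover, the strategy you sketch for that step does not line up with where the bound $f(2s)$ actually comes from, so there is no reason to believe the ``layerwise tightness'' bookkeeping is even well defined. In Proposition \ref{Higashitani}, the terms $\lfloor 2s/2^n\rfloor$ arise from a \emph{greedy support-covering} recursion (each successively chosen group element can contribute at most half as much new support as the previous one, via the $\xb_{q-1}+\xb_q$ trick), not from the $2$-adic filtration $\Lambda_\Delta\supseteq 2\Lambda_\Delta\supseteq\cdots$; in particular odd torsion is invisible to that filtration, and no mechanism is offered that attributes coordinates to filtration quotients. Note also that your recursion $f(2s)=2s+f(s)$ peels off the $n=0$ term, whereas the induction that actually works peels off the \emph{top} binary digit of $s$: the paper uses $f(2s)=(2^{r+2}-1)+f(2(s-2^r))$ with $r=\lfloor\log_2 s\rfloor$. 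Concretely, the paper's proof needs three ingredients your outline does not supply: (i) the Key Lemma \ref{key}, which combines the greedy covering of Proposition \ref{Higashitani}(c) with the rigidity Lemma \ref{1/2} to force an embedded copy of $M(r+2)$ with genuine $1/2$ entries; (ii) an induction on $r$ obtained by restricting to the complement of that block, where the degree drops from $s$ to $s-2^r$ and the support count drops by exactly $2^{r+2}-1$; and (iii) a separate argument (the $\widetilde{\xb_i}=\xb_i+\xb'$ modification, using the height bound $\height(\xb')\leq s<2^{r+1}$) to propagate the binary property from the subgroup generated by the covering elements to \emph{all} of $\Lambda_\Delta$. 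Your ``tightness propagation'' hope would have to reproduce all three, and none of them is routine.
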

We will explain what $\Lambda_\Delta \subset (\R/\Z)^{d+1}$ is in Section \ref{sec:notation}. 

\subsection{Cayley Conjecture}

Recently, {\em Cayley polytopes} or {\em Cayley decompositions} of lattice polytopes are well studied 
and play an essential role for the study of lattice polytopes (\cite{BH10, DN, DHNP, HNP, Ito}). 
Let us recall the notion of Cayley polytopes and Cayley decompositions. 
\begin{definition}
Let $P_1,\ldots,P_\ell \subset \R^{d+1-\ell}$ be lattice polytopes. 
The {\em Cayley sum} $P_1* \cdots *P_\ell$ of them is the convex hull of 
$(\{\eb_1\} \times P_1) \cup ( \{{\bf e}_2\} \times P_2) \cup \cdots \cup ( \{{\bf e}_\ell \times P_\ell\})$ 
in $\R^{d+1} \cong \R^\ell \times \R^{d+1-\ell}$, 
where ${\bf e}_1,\ldots,{\bf e}_\ell$ denote the unit vectors of $\R^\ell$. 
The lattice polytope of the form $P_1* \cdots *P_\ell$ is called a {\em Cayley polytope}. 
A {\em Cayley decomposition} of a lattice polytope $P \subset \R^d$ is a choice of unimodular equivalence classes of $P$ 
which is a Cayley sum of some lattice polytopes. 
\end{definition}
By definition, we see that a lattice polytope $P \subset \R^{d+1}$ is a Cayley sum of $\ell$ lattice polytopes 
if and only if $P$ is mapped onto a unimodualr simplex of dimension $\ell$ by a projection $\R^{d+1} \rightarrow \R^\ell$. 
Note that algebro-geometric interpretation of Cayley polytopes is also known by \cite{Ito}.

The following, known as {\em Cayley conjecture}, 
is one central problem which concerns a Cayley decomposition of lattice polytopes. 
\begin{conj}[{\cite[Conjecture 1.2]{DN}}]\label{yosou}
Let $P \subset \R^d$ be a lattice polytope of dimension $d$ with degree $s$. 
If $d > 2s$, then $P$ is a Cayley polytope of at least $(d+1-2s)$ lattice polytopes. 
\end{conj}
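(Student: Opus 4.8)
The plan is to pit the conjecture against the simplices classified in Theorem~\ref{main1}, and I expect the conclusion to be that it is \emph{false} rather than provable. For $s = 2^r$ with $r \geq 1$ the simplex $\Delta = \Delta(r+2)$ has dimension $d = 4s-2$, degree $s$, and $d+1 = 4s-1$ vertices, and every nonzero codeword of the underlying binary simplex code has constant weight $2s$. Since $d = 4s-2 > 2s$ in this range, the conjecture applies and predicts that $\Delta$ is a Cayley polytope of at least $d+1-2s = 2s-1$ lattice polytopes. The strategy is to bound from above the maximal number of factors in any Cayley decomposition of $\Delta$ and to show it is strictly smaller than $2s-1$.

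To do this I would first convert the Cayley condition into coding data. By the characterization recalled just before the conjecture, a Cayley decomposition of $\Delta$ into $\ell$ factors is the same as a lattice projection of $\Delta$ onto a unimodular simplex, which sorts the $d+1$ vertices into $\ell$ blocks (the fibres over the vertices of the simplex). In the language of $\Lambda_\Delta$ I expect this to translate into a partition of the coordinate set into blocks $B_1,\dots,B_\ell$ with $\sum_{i \in B_j} a_i \in \Z$ for every $a \in \Lambda_\Delta$ and every $j$. Using the identification underlying Main Result~1, namely that $\Lambda_\Delta$ is the binary simplex code $\Cc$ scaled by $\tfrac{1}{2}$, this condition says precisely that every codeword of $\Cc$ has even weight on each block, i.e. that each indicator vector $\mathbf{1}_{B_j}$ lies in the dual code $\Cc^{\perp}$. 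For the refutation only the forward implication is needed: a length-$\ell$ Cayley decomposition \emph{forces} integral block sums, hence each $\mathbf{1}_{B_j} \in \Cc^{\perp}$.

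The heart of the argument is then immediate from coding theory: the dual of the dimension-$(r+2)$ simplex code is the corresponding Hamming code, whose minimum distance is $3$. Hence each nonempty block has size $|B_j| \geq 3$, and a partition of the $4s-1$ coordinates into such blocks can have at most $\lfloor (4s-1)/3 \rfloor$ parts. Since $\lfloor (4s-1)/3 \rfloor < 2s-1$ for every $s \geq 2$, no Cayley decomposition of $\Delta$ reaches the length demanded by the conjecture, so $\Delta$ is a counterexample. The smallest case $r=1$ (so $s=2$, the $6$-dimensional degree-$2$ simplex from the $[7,3]$ simplex code) is already decisive: blocks of size $\geq 3$ partition $7$ coordinates into at most $2$ parts, whereas the conjecture demands $3$.

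The main obstacle is to justify the dictionary rather than the final count. Concretely I must verify that a length-$\ell$ Cayley decomposition \emph{implies} a coordinate partition with integral block sums over $\Lambda_\Delta$ (this necessary direction suffices for the upper bound), and I must confirm the code identifications $\Lambda_\Delta = \tfrac{1}{2}\Cc$ and $\Cc^{\perp}$ equal to the Hamming code of minimum distance $3$. Some care with the dimension conventions in the definition of the Cayley sum is also needed so that the projection is genuinely lattice-surjective onto a unimodular simplex. Once this translation is pinned down, the minimum-distance bound caps the number of blocks and the refutation follows.
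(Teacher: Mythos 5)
Your refutation is correct and takes essentially the same approach as the paper: its Proposition~\ref{cayley} is exactly your dictionary between Cayley decompositions and coordinate partitions with integral block sums, and its Lemma~\ref{deruta} is your block-size bound $|B_j|\geq 3$, proved there by observing that the columns of $M(r+2)$ are distinct, nonzero and $\{0,1/2\}$-valued --- which is precisely the fact that the dual (Hamming) code has minimum distance $3$. The paper additionally produces counterexamples for every degree $s\geq 2$ (not just $s=2^r$) via direct sums of the matrices $M(u_i+1)$ in Proposition~\ref{hanrei}, but your single family already suffices to refute the conjecture as stated.
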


Several partial answers for this conjecture are known. 
For example, this is true for smooth polytopes (\cite{DN}) or Gorenstein polytopes (\cite{DHNP}). 
Moreover, by using the invariant $\mu(P) \in \Q_{\geq 0}$ satisfying $\mu(P) \leq d+1-s$, called {\em $\Q$-codegree}, 
it is proved in \cite{DHNP} that if $d > 2(d-\mu(P))+1$, then $P$ is a Cayley polytope of at least $(2\lceil\mu(P)\rceil-d)$ lattice polytopes. 
(Refer to \cite[Theorem 3.4]{DHNP} for the precise statement.) 
Moreover, a weak version of this conjecture is solved in \cite[Theorem 1.2]{HNP}, i.e., 
a bound for the number of Cayley summands is given by a quadratic of the degree.

In Section \ref{sec:counter}, we will claim that this conjecture does not hold in general. 
More precisely, we provide an example of a lattice simplex $\Delta$ of dimension $d$ with degree $s$ 
such that $d>2s$ but $\Delta$ is a Cayley polytope into {\em less than} $(d+1-2s)$ lattice polytopes. 
Actually, those counterexamples come from the lattice simplices appearing in Theorem \ref{main1}. 
Moreover, we will suggest a modification of Conjecture \ref{yosou}. 
We remark that the examples given in this paper leave the possibility open that $d > 2s$ still implies that 
$P$ is a Cayley polytope of at least two lattice polytopes.

\subsection{Organization}
This paper is organized as follows: 
In Section \ref{sec:notation}, we introduce a finite abelian group associated with a lattice simplex, 
which plays an essential role for the classification of unimodular equivalence classes for lattice simplices, and we prepare some lemmas. 
In Section \ref{sec:code}, we introduce a binary simplex code and the finite abelian group arising from it and discuss its properties. 
In Section \ref{sec:syoumei}, we prove Theorem \ref{main1} and Theorem \ref{main2}. 
In Section \ref{sec:counter}, we supply a counterexample for Conjecture \ref{yosou}. 

\subsection*{Acknowledgement}
The author would like to express a lot of thanks to Johannes Hofscheier and Benjamin Nill 
for many advices and helpful comments on the main results and Cayley conjecture. 
The author also would like to thank to Kenji Kashiwabara for many fruitful and intriguing discussions. 
The essential ideas of the proofs of the main results come from the discussions with him.

\bigskip

\section{Finite abelian groups associated with lattice simplices}\label{sec:notation}

In this section, we introduce the finite abelian group associated with a lattice simplex 
and discuss some properties on a lattice simplex in terms of this group. 

Let $\Delta \subset \R^d$ be a lattice simplex of dimension $d$ with 
its vertices ${\bf v}_1,{\bf v}_2,\ldots,{\bf v}_{d+1} \in \Z^d$. 
We introduce 
$$\Lambda_\Delta = \left\{(x_1,x_2,\ldots,x_{d+1}) \in (\R/\Z)^{d+1} : \sum_{i=1}^{d+1} x_i{\bf v}_i \in \Z^d, \; \sum_{i=1}^{d+1}x_i \in \Z \right\}$$
equipped with its addition defined by $\xb+\yb=(x_1+y_1,\ldots,x_{d+1}+y_{d+1}) \in (\R/\Z)^{d+1}$ 
for $\xb=(x_1,\ldots,x_{d+1}) \in (\R/\Z)^{d+1}$ and $\yb=(y_1,\ldots,y_{d+1}) \in (\R/\Z)^{d+1}$. 
We can see that $\Lambda_\Delta$ is a finite abelian group.

Let $\mathcal{F}(d)$ denote the set of unimodular equivalence classes of lattice simplices of dimension $d$ with a fixed vertex order 
and let $\mathcal{A}(d)$ denote the set of finite abelian subgroups $\Lambda$ of $(\R/\Z)^{d+1}$ satisfying that 
the sum of all entries of each element in $\Lambda$ is an integer. 

Actually, the correspondence 
\begin{align*}
\mathcal{F}(d) \rightarrow \mathcal{A}(d); \;\;\; \Delta \mapsto \Lambda_\Delta
\end{align*}
provides a bijection (\cite[Theorem 2.3]{BH13}). 
In particular, a unimodular equivalence class of lattice simplices $\Delta$ is uniquely determined 
by the finite abelian group $\Lambda_\Delta$ up to permutation of coordinates. 

We can discuss $h^*_\Delta(t)$, $\dg(\Delta)$, $\vol(\Delta)$ and 
whether $\Delta$ is a lattice pyramid in terms of $\Lambda_\Delta$. 
We fix some notation: For $\xb=(x_1,\ldots,x_{d+1}) \in \Lambda_\Delta$, where each $x_i$ is taken with $0 \leq x_i <1$, 
\begin{itemize}
\item let $\height(\xb)=\sum_{i=1}^{d+1} x_i \in \Z_{\geq 0}$; 
\item let $\spp (\xb)= \{i \in [d+1] : x_i \not= 0\}$, where $[n]=\{1,\ldots,n\}$ for each $n \in \Z_{>0}$; 
\item let $\wt(\xb)=|\spp(\xb)|$. 
\end{itemize}
Then we have 
$$h^*_\Delta(t)=\sum_{\xb \in \Lambda_\Delta} t^{\height(\xb)}.$$
Consult, e.g., \cite[Corollary 3.11]{BR}. 
In particular, $$\dg(\Delta)=\max\{\height(\xb) : \xb \in \Lambda_\Delta\}\; \text{ and }\;\vol(\Delta)=|\Lambda_\Delta|.$$ 

In addition: 
\begin{lem}[{cf. \cite[Proposition 2.5]{BH13}}]\label{lem2}
Let $\Delta$ be a lattice simplex of dimension $d$. Then $\Delta$ is not a lattice pyramid if and only if 
there is $(x_1,\ldots,x_{d+1}) \in \Lambda_\Delta$ such that $x_i \not= 0$ for each $i \in [d+1]$. 
\end{lem}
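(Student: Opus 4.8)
The plan is to prove the equivalence by tracking how the associated group transforms under the pyramid construction and then invoking the bijection $\mathcal{F}(d) \cong \mathcal{A}(d)$ of \cite[Theorem 2.3]{BH13}. The computation I would isolate first is the identity $\Lambda_{\mathrm{Pyr}(P)} = \Lambda_P \times \{0\}$ for a lattice $(d-1)$-simplex $P$. Realizing $\mathrm{Pyr}(P) = \con(\{(\mathbf{w},0) : \mathbf{w} \in P\} \cup \{(\mathbf{0},1)\}) \subset \R^{d}$ with base vertices $(\mathbf{w}_1,0),\ldots,(\mathbf{w}_d,0)$ and apex $(\mathbf{0},1)$, the last coordinate of the membership condition $\sum_{i=1}^{d} x_i(\mathbf{w}_i,0) + x_{d+1}(\mathbf{0},1) \in \Z^{d}$ reads $x_{d+1} \in \Z$, forcing $x_{d+1}=0$ in $\R/\Z$; the surviving equations are then exactly those defining $\Lambda_P$. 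Hence forming a pyramid simply appends one coordinate that vanishes identically on $\Lambda_{\mathrm{Pyr}(P)}$, while preserving the rest of the group.

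With this identity in hand I would read the asserted existence coordinatewise, as the statement that no coordinate of $\Lambda_\Delta$ is identically zero, i.e. that for every $i \in [d+1]$ some (possibly $i$-dependent) $\xb \in \Lambda_\Delta$ has $x_i \neq 0$. One implication is then immediate from the identity: if $\Delta \cong \mathrm{Pyr}(P)$, the apex coordinate of $\Lambda_\Delta$ is identically zero, so that coordinate admits no nonzero witness and $\Delta$ fails the condition. For the converse I would argue by contraposition. Suppose the $i$-th coordinate is identically zero on $\Lambda_\Delta$; after reordering take $i=d+1$. Deleting this coordinate is injective on $\Lambda_\Delta$ (its kernel would consist of elements supported on the deleted, hence zero, coordinate) and produces a subgroup $\Lambda' \subset (\R/\Z)^{d}$ whose coordinate sums remain integral, so $\Lambda' \in \mathcal{A}(d-1)$ and corresponds under the bijection to a $(d-1)$-simplex $\Delta'$ with $\Lambda_{\Delta'} = \Lambda'$. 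Then $\Lambda_{\mathrm{Pyr}(\Delta')} = \Lambda' \times \{0\} = \Lambda_\Delta$ by the identity, and injectivity of $\Delta \mapsto \Lambda_\Delta$ yields $\Delta \cong \mathrm{Pyr}(\Delta')$, a lattice pyramid.

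The step I expect to be the main obstacle is this reverse reconstruction: converting the group-theoretic fact ``one coordinate vanishes identically'' into an honest unimodular equivalence with a pyramid. The clean route is to avoid rebuilding vertices by hand and instead lean entirely on the bijectivity of $\mathcal{F}(d) \cong \mathcal{A}(d)$, so that matching the two groups suffices; the only bookkeeping is to check that dropping the zero coordinate keeps the integrality of coordinate sums, placing $\Lambda'$ in $\mathcal{A}(d-1)$. Finally, to forestall a natural misreading, I would stress that the existence here is genuinely coordinatewise. An element of $\Lambda_\Delta$ with all entries nonzero is certainly sufficient to rule out a pyramid, but it is strictly stronger than necessary and need not exist: since each kernel $\{\xb \in \Lambda_\Delta : x_i = 0\}$ is a subgroup, $\Lambda_\Delta$ may be exhausted by these kernels even when none of them is all of $\Lambda_\Delta$. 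This already happens for the simplices of Theorem \ref{main2}, where $\Lambda_\Delta \subset \{0,1/2\}^{f(2s)}$ and $\dg(\Delta)=s$ bounds the weight of every element by $2s < f(2s)$, so no element has full support although $\Delta$ is not a pyramid.
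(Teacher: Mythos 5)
Your proof is correct, and there is in fact nothing internal to compare it against: the paper states Lemma \ref{lem2} without proof, quoting it from \cite[Proposition 2.5]{BH13}, and your argument --- the identity $\Lambda_{\mathrm{Pyr}(P)}=\Lambda_P\times\{0\}$, obtained by reading off the apex coordinate, combined with surjectivity and injectivity of the correspondence $\mathcal{F}(d)\to\mathcal{A}(d)$ to reconstruct $\Delta\cong\mathrm{Pyr}(\Delta')$ from a vanishing coordinate --- is exactly the standard route taken in that reference. The most valuable part of your write-up is the quantifier discussion, and you resolved it correctly: read literally as $\exists\,\xb\,\forall i$, the statement would be false, since the paper's own simplices $\Delta(r+2)$ are not lattice pyramids while every nonzero element of $B(r+2)=\Lambda_{\Delta(r+2)}$ has weight $2^{r+1}<2^{r+2}-1=d+1$, so no element has full support (your general bound $\wt(\xb)\leq 2s<f(2s)$ via Lemma \ref{lem1} makes the same point); the intended reading is the coordinatewise one $\forall i\,\exists\,\xb$, i.e. $|\spp(\Lambda_\Delta)|=d+1$, which is precisely how the paper restates the lemma immediately after Lemma \ref{lem1} and how it is used in the proof of Theorem \ref{main1}. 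Two minor remarks: the injectivity of the coordinate-deletion map is true but superfluous --- once the last coordinate vanishes identically on $\Lambda_\Delta$ you have $\Lambda_\Delta=\Lambda'\times\{0\}$ outright --- and in the forward direction you should state explicitly that unimodular equivalence alters $\Lambda_\Delta$ only by a permutation of coordinates (this is part of the bijection statement from \cite{BH13} quoted in Section \ref{sec:notation}), so that ``some coordinate vanishes identically'' is genuinely an invariant of the equivalence class rather than of a particular vertex ordering.
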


We consider a finite abelian subgroup $\Lambda$ of $(\R/\Z)^{d+1}$ (not necessarily the sum of the entries is an integer), i.e., 
$\Lambda$ is more general than $\Lambda_\Delta$. 
We use the same notation $\spp(\xb), \wt(\xb)$ and $\height(\xb)$ for $\xb \in \Lambda$ as above. We also use the notation 
$$\spp(\Lambda):=\bigcup_{\xb \in \Lambda} \spp(\xb),  \;\; \wt(\Lambda):=\max\{\wt(\xb) : \xb \in \Lambda\}, \;\; 
\dg(\Lambda):=\max\{\height(\xb) : \xb \in \Lambda\}.$$
Notice that a lattice simplex $\Delta$ of dimension $d$ is not a lattice pyramid if and only if $|\spp(\Lambda_\Delta)|=d+1$ by Lemma \ref{lem2}. 

\begin{lem}[{cf. \cite[Lemma 11]{N08}}]\label{lem1}
Let $\Lambda$ be a finite abelian subgroup of $(\R/\Z)^e$ and let $\dg(\Lambda) = s$. 
Then $\wt(\xb) \leq 2s$ for each $\xb \in \Lambda$. 
\end{lem}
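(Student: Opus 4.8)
The plan is to exploit the negation involution $\xb \mapsto -\xb$ on the finite abelian group $\Lambda$, observing that it preserves the support of an element while complementing its height. Fix an arbitrary $\xb = (x_1, \ldots, x_e) \in \Lambda$, with each coordinate represented so that $0 \leq x_i < 1$, and set $w = \wt(\xb)$. The goal is to bound $w$ by $2s$ using only the hypothesis that every element of $\Lambda$ has height at most $s$.

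First I would analyze the support of $-\xb$. For each index $i$, the $i$th coordinate of $-\xb$, taken again in $[0,1)$, equals $0$ precisely when $x_i = 0$, and equals $1 - x_i \in (0,1)$ when $x_i \neq 0$. Hence the two elements share the same support, $\spp(-\xb) = \spp(\xb)$, and in particular $\wt(-\xb) = w$.

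Next I would compute the two heights over this common support. We have $\height(\xb) = \sum_{i \in \spp(\xb)} x_i$, while $\height(-\xb) = \sum_{i \in \spp(\xb)} (1 - x_i) = w - \height(\xb)$. Adding these yields the key identity
$$\height(\xb) + \height(-\xb) = w.$$
Since $\dg(\Lambda) = s$ means that every element of $\Lambda$ has height at most $s$, and since both $\xb$ and $-\xb$ belong to $\Lambda$, we obtain $\height(\xb) \leq s$ and $\height(-\xb) \leq s$. Substituting into the identity gives $w \leq 2s$, which is exactly the assertion.

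I do not expect any serious obstacle here: the whole argument rests on the simple fact that a coordinate and its additive inverse in $\R/\Z$ are simultaneously zero or simultaneously nonzero, so the support is invariant under negation while the two heights over that support are complementary. The only point demanding a moment of care is the bookkeeping of canonical representatives in $[0,1)$ when passing from $\xb$ to $-\xb$; getting this right is precisely what makes the complementary-height identity hold exactly, and hence what forces the clean bound $2s$ rather than something weaker.
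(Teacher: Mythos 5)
Your proof is correct and is essentially identical to the paper's: both use the inverse $-\xb$, note that its coordinates over the common support are $1-x_i$, and conclude $\wt(\xb)=\height(\xb)+\height(-\xb)\leq 2s$. Your write-up merely spells out the bookkeeping of representatives in $[0,1)$ that the paper leaves implicit.
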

\begin{proof}
For each $\xb \in \Lambda$, let $-\xb$ denote the inverse of $\xb$. Then 
$$\wt(\xb)=\sum_{i \in \spp(\xb)} (x_i+(1-x_i)) = \height(\xb)+\height(-\xb) \leq 2s.$$
\end{proof}

For a positive integer $M$, let 
$$f(M):=\sum_{n = 0}^\infty \left\lfloor \frac{M}{2^n} \right\rfloor
=\sum_{n=0}^{\lfloor \log_2 M \rfloor} \left\lfloor \frac{M}{2^n} \right\rfloor.$$ 
\begin{prop}\label{Higashitani} 
Let $\Lambda$ be a finite abelian group of $(\R/\Z)^e$ with $e \geq 3$ 
and let $\wt(\Lambda) \leq M$ with $M \in \Z_{\geq 2}$. 
Assume that $\left| \spp(\Lambda) \right|=e$. Then the following assertions hold. 
\begin{itemize}
\item[(a)] One has $e \leq f(M) \leq 2M-1$ {\em (cf. \cite[Theorem 10]{N08})}.  
\item[(b)] If $e=2M-1$, then $M=2^r$ for some $r \in \Z_{\geq 1}$. 
\item[(c)] Let $e=f(M)$. Then there exist some elements $\xb_1,\ldots,\xb_{\lfloor \log_2M \rfloor +1} \in \Lambda$ 
such that $\displaystyle \bigcup_{i=1}^{\lfloor \log_2M \rfloor +1} \spp(\xb_i)=\spp(\Lambda)$ holds. 
Conversely, if there exist $\xb_1,\ldots,\xb_q \in \Lambda$ with $\bigcup_{i=1}^q \spp(\xb_i)=\spp(\Lambda)$, 
then $q \geq \lfloor \log_2M \rfloor +1$. 
\end{itemize}
\end{prop}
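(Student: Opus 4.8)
The engine of the whole proof will be a single \emph{peeling lemma}: if $\Lambda\subseteq(\R/\Z)^e$ is a finite abelian group with $\spp(\Lambda)=[e]$ and $\xb_1\in\Lambda$ is an element of \emph{maximal} weight, then the projection $\Lambda'$ of $\Lambda$ onto the coordinates $[e]\setminus\spp(\xb_1)$ satisfies $\wt(\Lambda')\le\lfloor M/2\rfloor$. To prove this I would compare each $\yb$ with $\yb+\xb_1$: maximality forces $\wt(\yb+\xb_1)\le\wt(\xb_1)$, and since $\spp(\yb+\xb_1)$ agrees with $\spp(\yb)$ outside $\spp(\xb_1)$, this inequality rearranges to $|\spp(\yb)\setminus\spp(\xb_1)|\le|\spp(\yb)\cap\spp(\xb_1)|$, i.e. at least half of the support of \emph{every} element already lies inside $\spp(\xb_1)$. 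I will also use the arithmetic identity $f(M)=2M-s_2(M)$, where $s_2(M)$ is the binary digit sum (equivalently $f(M)=M+f(\lfloor M/2\rfloor)$); the bound $f(M)\le 2M-1$ is then immediate, which gives the easy half of (a).

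For the main inequality $e\le f(M)$ in (a), the plan is to run the peeling lemma as an induction on $M$. Projecting off the support of a maximal-weight element removes $\wt(\xb_1)\le M$ coordinates and leaves a full-support group of weight at most $\lfloor M/2\rfloor$, whose length is bounded by $f(\lfloor M/2\rfloor)$ by the inductive hypothesis; the recursion $f(M)=M+f(\lfloor M/2\rfloor)$ then closes the induction (and in particular recovers Nill's $e\le 2M-1$). Part (b) follows formally: combining $e\le f(M)\le 2M-1$ with the hypothesis $e=2M-1$ forces $f(M)=2M-1$, i.e. $s_2(M)=1$, so $M$ is a power of $2$; the assumption $M\ge 2$ yields $r\ge 1$.

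Part (c) is where the two directions diverge. For the existence of a covering I would simply iterate the peeling: the maximal-weight elements $\xb_1,\xb_2,\dots$ removed at successive stages have weights at most $M,\lfloor M/2\rfloor,\lfloor M/4\rfloor,\dots$, so after $\lfloor\log_2 M\rfloor+1$ steps the ambient weight bound has dropped to $0$ and every coordinate has been peeled; lifting these elements back to $\Lambda$ produces $\lfloor\log_2M\rfloor+1$ elements whose supports cover $\spp(\Lambda)$.

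The converse, that no fewer than $\lfloor\log_2M\rfloor+1$ elements can cover $\spp(\Lambda)$, is the step I expect to be the main obstacle, since a naive union bound only gives $q\ge e/M\approx 2$ and one must genuinely use the group law. My plan is to pass to $\F_2$: given a cover $\xb_1,\dots,\xb_q$, record for each coordinate $i$ the indicator vector $\chi_i\in\F_2^q$ of which $\xb_j$ are nonzero there. For every $a\in\F_2^q$ the element $\sum_{j:\,a_j=1}\xb_j$ has weight $\le M$, and its support contains the symmetric difference of the corresponding supports, so $|\{i:\langle a,\chi_i\rangle=1\}|\le M$. This converts the problem into a statement about a binary code of length $e$ and dimension $\le q$ with all weights $\le M$ and no zero coordinate, for which I would establish the sharp bound $e\le\sum_{n=0}^{q-1}\lfloor M/2^n\rfloor$ by induction on the dimension: peeling a maximal-weight codeword both halves the weight bound (by the peeling lemma) \emph{and} lowers the dimension by at least one, because that codeword is supported on the peeled coordinates and hence lies in the kernel of the projection. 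Since $e=f(M)$ while $\sum_{n=0}^{q-1}\lfloor M/2^n\rfloor<f(M)$ whenever $q\le\lfloor\log_2M\rfloor$, this forces $q\ge\lfloor\log_2M\rfloor+1$. The delicate points to verify carefully will be the legitimacy of the $\F_2$-encoding (only symmetric differences survive, which is precisely what makes it valid) and the dimension-drop in the inductive step.
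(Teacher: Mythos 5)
Your peeling lemma is correct, and parts (a), (b) and the existence half of (c) do go through with it. The comparison of $\yb+\xb_1$ against a maximal-weight $\xb_1$ is the same use of the group law as in the paper's proof, where one compares $\xb_{q-1}+\xb_q$ and $\xb_q$ against the greedily chosen $\xb_{q-1}$ to get $2|I_q|\leq |I_{q-1}|$; your version repackages this as an induction on $M$ via the recursion $f(M)=M+f(\lfloor M/2\rfloor)$ instead of the paper's chain of increments \eqref{simesu}--\eqref{ccc}, and the identity $f(M)=2M-s_2(M)$ (binary digit sum) gives (b) just as the paper's \eqref{hutousiki} does. Iterating the peeling and lifting the peeled elements back to $\Lambda$ is a clean and valid proof of the covering existence in (c) (in fact for any $\Lambda$, not only the extremal ones).

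The converse half of (c) has a genuine gap, exactly at the point you flagged as delicate: the passage to $\F_2$. You claim that for $a\in\F_2^q$ the support of $\sum_{j:a_j=1}\xb_j$ contains the set of coordinates lying in an odd number of the supports $\spp(\xb_j)$ with $a_j=1$. This inclusion is true for two summands but false for three or more as soon as entries are not $2$-torsion: if $x_{1,i}=x_{2,i}=x_{3,i}=1/3$ at some coordinate $i$, then $i$ lies in an odd number (three) of the supports, yet $(\xb_1+\xb_2+\xb_3)_i=0$. Hence the indicator vectors $\chi_i$ need not span a binary code whose nonzero words all have weight $\leq M$, and your Griesmer-type induction (whose $\F_2$-internal steps, including the dimension drop, are fine) has nothing to apply to. You cannot repair this by assuming $\Lambda\subset\{0,1/2\}^e$: in the extremal case $e=f(M)$ that $2$-torsion property is essentially Theorem \ref{main2}, which the paper proves \emph{later}, using this very proposition through Lemma \ref{key}, so invoking it here is circular. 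The same obstruction blocks the obvious direct fix of running the peeling induction on the subgroup generated by $\xb_1,\ldots,\xb_q$: peeling a maximal-weight element of a finite abelian group need not lower the minimal number of generators (in $\Z/4$ the element of order $2$ generates no complementary summand), which is precisely why you wanted the vector-space setting. For comparison, the paper derives the converse not from a coding bound but from the equality case of its greedy chain \eqref{ccc}: when $e=f(M)$ every greedy increment is forced to equal $\lfloor M/2^{j-1}\rfloor$, so a covering family with $q\leq\lfloor\log_2 M\rfloor$ elements would give $e=\sum_{j=1}^{q}|I_j|<f(M)$; any repair of your route must likewise let the group law re-enter at this step rather than relying on support combinatorics over $\F_2$ alone.
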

\begin{proof}
Although most parts of the statements can be obtained just by modifying the proof of \cite[Theorem 10]{N08} slightly, 
we give a precise proof for the completeness. 

\smallskip

\noindent
(a) First, we show the inequalities $e=\left| \bigcup_{\xb \in \Lambda} \spp(\xb) \right| \leq \sum_{n=0}^\infty \lfloor M/2^n \rfloor \leq 2M-1$. 
Let $\xb_1 \in \Lambda$ with $\wt(\xb_1)$ maximal. 
We choose some elements $\xb_1,\xb_2,\ldots \in \Lambda$ successively in a ``greedy'' manner such that $|I_j|$ is maximal, 
where $I_j=\spp(\xb_j) \setminus \bigsqcup_{i=1}^{j-1} I_i$ for $j \geq 2$ and $I_1=\spp(\xb_1)$. 
What we may prove is the inequality 
\begin{align}\label{simesu}
|I_j| \leq \left\lfloor \frac{|I_1|}{2^{j-1}} \right\rfloor \; \text{ for each } \; j \geq 2.
\end{align}
In fact, since $\bigcup_{\xb \in \Lambda} \spp(\xb) = \bigsqcup_{j \geq 1} I_j$, we obtain 
\begin{align}\label{ccc}
e=\left|\bigcup_{\xb \in \Lambda} \spp(\xb) \right| = \sum_{j \geq 1} |I_j| \leq \sum_{j \geq 1} \left\lfloor \frac{|I_1|}{2^{j-1}} \right\rfloor 
\leq \sum_{n \geq 0}\left\lfloor \frac{M}{2^n} \right\rfloor =f(M)\end{align}
from \eqref{simesu} and $|I_1|=\wt(\xb_1) \leq M$, and we also obtain 
\begin{align}\label{hutousiki}
f(M)=\sum_{n = 0}^\infty\left\lfloor \frac{M}{2^n} \right\rfloor = \sum_{n = 0}^{\lfloor \log_2 M \rfloor}\left\lfloor \frac{M}{2^n} \right\rfloor 
\leq \sum_{n = 0}^{\lfloor \log_2M \rfloor} \frac{M}{2^n} \leq M \cdot \frac{1-\frac{1}{2M}}{1-\frac{1}{2}}=2M-1.
\end{align}

We prove \eqref{simesu} by induction on the number $q$ of possible elements $\xb_1,\xb_2,\ldots,\xb_q$. 
The case $q=1$ is clear. Let $q>1$ and suppose that the assertion is true for $q-1$. Set $I_q'=I_{q-1} \cap \spp(\xb_q)$. Then we have 
$I_q \sqcup I_q' \subset \spp(\xb_q) \setminus \bigcup_{i=1}^{q-2} \spp(\xb_i)$. 
Since $\xb_{q-1}$ is chosen with $|I_{q-1}|$ maximal, we obtain \begin{align}\label{eq1}|I_q| + |I_q'| \leq |I_{q-1}|.\end{align}
On the other hand, by considering $\spp(\xb_{q-1}+\xb_q)$, we also have 
$(I_{q-1} \setminus I_q') \sqcup I_q \subset \spp(\xb_{q-1}+\xb_q) \setminus \bigcup_{i=1}^{q-2} \spp(\xb_i)$. 
By the maximality of $|I_{q-1}|$ again, we obtain 
\begin{align}\label{eq2} |I_{q-1}| - |I_q'| +|I_q| \leq |I_{q-1}|. \end{align}
Hence, by \eqref{eq1} and \eqref{eq2}, we see that $2|I_q| \leq |I_{q-1}|$. Thus, $|I_q| \leq \lfloor |I_{q-1}|/2 \rfloor$. 
By the inductive hypothesis, we conclude that 
$$|I_q| \leq \left\lfloor \frac{|I_{q-1}|}{2} \right\rfloor 
\leq \left\lfloor \frac{\left\lfloor \frac{|I_1|}{2^{q-2}} \right\rfloor}{2} \right\rfloor
=\left\lfloor \frac{|I_1|}{2^{q-1}} \right\rfloor.$$

\noindent
(b) Assume $e=2M-1$. Then one has $f(M)=2M-1$ by (a). Thus it directly follows from \eqref{hutousiki} that $M$ must be a power of $2$. 

\noindent
(c) Assume $e=f(M)$. Then the equalities of the inequalities in \eqref{ccc} are satisfied. 
In particular, one has $|I_j|=\lfloor M/2^{j-1} \rfloor$ for each $j$. 
This implies that the number $q$ of possible elements should be at least $\lfloor \log_2 M \rfloor +1$, 
otherwise $e=\sum_{j=1}^q |I_j|<f(M)$. 
\end{proof}

We also see the following observation: 
\begin{lem}\label{1/2}
Given an even number $M \geq 2$ and an integer $e \geq 3$, 
let $\Lambda$ be a finite abelian subgroup of $(\R/\Z)^e$ with $\wt(\Lambda)=M$. 
Let $\xb = (x_1,\ldots,x_e) \in \Lambda$ and $\xb' = (x_1', \ldots, x_e') \in \Lambda$ and 
assume that $$\wt(\xb)=\wt(\xb')=M \text{ and }|\spp(\xb) \cap \spp(\xb')|=M/2.$$ 
Then $x_i \in \{0,1/2\}$ and $x_i' \in \{0,1/2\}$ for each $1 \leq i \leq e$. 
\end{lem}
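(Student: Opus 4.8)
The plan is to exploit the group structure of $\Lambda$ together with the maximality constraint $\wt(\Lambda)=M$, applied first to $\xb\pm\xb'$ and then, crucially, to an auxiliary element obtained by bootstrapping. Write $A=\spp(\xb)$ and $B=\spp(\xb')$, so that $|A|=|B|=M$ and $|A\cap B|=M/2$; hence $|A\setminus B|=|B\setminus A|=M/2$ and the symmetric difference $(A\setminus B)\cup(B\setminus A)$ has size exactly $M$ (here $M/2\geq 1$, so all these sets are nonempty).

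First I would establish a general intersection claim: for any $\mathbf{u},\mathbf{v}\in\Lambda$ with $\wt(\mathbf{u})=\wt(\mathbf{v})=M$ and $|\spp(\mathbf{u})\cap\spp(\mathbf{v})|=M/2$, every coordinate of $\mathbf{u}$ and of $\mathbf{v}$ indexed by $\spp(\mathbf{u})\cap\spp(\mathbf{v})$ equals $1/2$. To see this, note that on the symmetric difference of $\spp(\mathbf{u})$ and $\spp(\mathbf{v})$ exactly one of $u_i,v_i$ vanishes, so $\spp(\mathbf{u}+\mathbf{v})$ contains this symmetric difference, a set of size $M$. Since $\mathbf{u}+\mathbf{v}\in\Lambda$ and $\wt(\Lambda)=M$, equality must hold; were some $u_i+v_i$ nonzero on the overlap, the support would exceed $M$. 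Thus $u_i+v_i\equiv 0\pmod 1$ on the intersection. Running the identical argument with $\mathbf{u}-\mathbf{v}$ forces $u_i-v_i\equiv 0\pmod 1$ there as well. Combining the two congruences and using $0\leq u_i,v_i<1$ yields $u_i=v_i=1/2$ on the overlap, proving the claim.

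Applying this claim to $(\xb,\xb')$ immediately settles the coordinates indexed by $A\cap B$. To reach the remaining coordinates I would introduce $\yb:=\xb+\xb'$. By the support computation above, $\spp(\yb)=(A\setminus B)\cup(B\setminus A)$, so $\wt(\yb)=M$; moreover $|\spp(\xb)\cap\spp(\yb)|=|A\setminus B|=M/2$ and $|\spp(\xb')\cap\spp(\yb)|=|B\setminus A|=M/2$. Hence the intersection claim applies verbatim to $(\xb,\yb)$ and to $(\xb',\yb)$, giving $x_i=1/2$ for $i\in A\setminus B$ and $x_i'=1/2$ for $i\in B\setminus A$. Together with the $A\cap B$ case this forces $x_i=1/2$ for all $i\in A$ and $x_i'=1/2$ for all $i\in B$, while the remaining coordinates are $0$; therefore $x_i,x_i'\in\{0,1/2\}$ for every $i$, as desired.

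The one step demanding care, and the genuine crux, is the bootstrap: the direct application to $(\xb,\xb')$ controls only the overlap $A\cap B$, and the point is to recognize that $\yb=\xb+\xb'$ is again an element of maximal weight $M$ meeting each of $\spp(\xb),\spp(\xb')$ in precisely $M/2$ indices, so the same intersection claim can be reused to propagate the value $1/2$ onto $A\setminus B$ and $B\setminus A$. Everything else reduces to the routine observation that the constraint $\wt(\Lambda)=M$ forces the relevant sums and differences to vanish on the overlapping support.
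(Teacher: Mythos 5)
Your proof is correct. It is built on the same underlying mechanism as the paper's --- taking small integer combinations of $\xb$ and $\xb'$ inside $\Lambda$ and using the bound $\wt(\Lambda)=M$ to force the overlap coordinates to sum to integers --- but it is organized genuinely differently. The paper argues in one stream of computations, in the opposite order: it first considers $\xb+\xb'$ to get $x_i+x_i'=1$ on $\spp(\xb)\cap\spp(\xb')$, then $2\xb+\xb'$ and $\xb+2\xb'$ to pin down the value $1/2$ on $\spp(\xb)\setminus\spp(\xb')$ and $\spp(\xb')\setminus\spp(\xb)$, and finally $3\xb+\xb'$ and $\xb+3\xb'$ to resolve the overlap. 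You instead isolate a symmetric, reusable intersection claim, whose proof uses both $\mathbf{u}+\mathbf{v}$ \emph{and} $\mathbf{u}-\mathbf{v}$; exploiting the group inverse is the key difference, since the congruences $u_i+v_i\equiv 0$ and $u_i-v_i\equiv 0 \pmod 1$ together give $u_i=v_i=1/2$ on the overlap in one stroke, which the paper only obtains at its third round. You then bootstrap by applying the claim to $(\xb,\yb)$ and $(\xb',\yb)$ with $\yb=\xb+\xb'$; note that $\xb+\yb=2\xb+\xb'$ and $\xb-\yb=-\xb'$, so your auxiliary elements coincide with the paper's up to inverses. What your route buys is a cleaner, modular argument (one lemma applied three times, no coefficient-$3$ combinations); what the paper's buys is a shorter, self-contained computation that never needs to verify the hypotheses of an auxiliary claim for new pairs. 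Both are complete and correct.
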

\begin{proof}
Let $\xb=(x_1,\ldots,x_e) \in \Lambda$ and $\xb'=(x_1',\ldots,x_e') \in \Lambda$ as in the statement. 
Note that $|\spp(\xb) \setminus \spp(\xb')|=|\spp(\xb') \setminus \spp(\xb)|=M/2$. 

First, consider $\xb+\xb'$. Since $\wt(\xb+\xb') \leq M$, it follows that 
\begin{align}\label{111}
x_i+x_i' = 1 \text{ for each }i \in \spp(\xb) \cap \spp(\xb').  
\end{align}

Next, consider $2\xb+\xb'$. Since $\wt(2\xb+\xb') \leq M$ and $2x_i+x_i'=x_i+1 \not\in \Z$ holds for each $i \in \spp(\xb) \cap \spp(\xb')$ by \eqref{111}, 
we have $2x_i=1$ for each $i \in \spp(\xb) \setminus \spp(\xb')$. Similarly, by considering $\xb+2\xb'$, we obtain 
\begin{align}\label{222}
x_i=1/2 \text{ for each }i \in \spp(\xb) \setminus \spp(\xb') \text{ and }x_i'=1/2 \text{ for each }i \in \spp(\xb') \setminus \spp(\xb). 
\end{align}

Next, consider $3\xb+\xb'$. By \eqref{111} and \eqref{222}, we see that 
$2x_i=1$ for each $i \in \spp(\xb) \cap \spp(\xb')$. Similarly, by considering $\xb+3\xb'$, we obtain 
\begin{align}\label{333}
x_i=x_i'=1/2 \text{ for each }i \in \spp(\xb) \cap \spp(\xb'). 
\end{align}

Therefore, \eqref{222} and \eqref{333} imply the desired conclusion. 
\end{proof}

\bigskip

\section{Binary simplex codes and the associated lattice simplices}\label{sec:code}

In this section, we introduce some elements of linear codes, especially, binary codes. 
We associate the finite abelian group of $(\R/\Z)^{d+1}$ (i.e., the lattice simplex of dimension $d$) with a binary simplex code $C \subset \F_2^{d+1}$. 
Binary simplex codes and the associated lattice simplices will play the important role in this paper. 

Linear subspaces of the vector space over a finite field are called a {\em linear code}. 
Let $\F_p=\{0,1,\ldots,p-1\}$ be a finite field of prime order $p$. 
We call a linear code {\em binary} if $p=2$. 
We set the map $g: \F_p^{d+1} \rightarrow (\R/\Z)^{d+1}$ defined by 
$g((a_1,a_2,\ldots,a_{d+1}))=(a_1/p,a_2/p, \ldots,a_{d+1}/p) \in (\R/\Z)^{d+1}$ for $(a_1,\ldots,a_{d+1}) \in \F_p^{d+1}$. 
Then, for a linear code $C \subset \F_p^{d+1}$, $g(C)$ can be regarded as a finite abelian subgroup of $(\R/\Z)^{d+1}$. 

We will often use the following notation in the remaining parts. 
\begin{itemize}
\item For a positive integer $r$, 
we consider all the points in $(r-1)$-dimensional projective space $\PP_2^{r-1}$ over $\F_2$. 
There are $(2^r-1)$ points in $\PP_2^{r-1}$. 
For each of those points in $\PP_2^{r-1}$, we associate the column vector and we denote by $A(r) \in \F_2^{r \times (2^r-1)}$ 
the $(r \times (2^r-1))$-matrix whose columns are those vectors. 
For example, $A(2)=\begin{pmatrix} 1 &1 &0 \\ 1 &0 &1 \end{pmatrix}$ 
and $A(3)=\begin{pmatrix} 1 &1 &1 &0 &1 &0 &0 \\ 1 &1 &0 &1 &0 &1 &0 \\ 1 &0 &1 &1 &0 &0 &1 \end{pmatrix}.$
\item Let $M(r) \in \{0,1/2\}^{r \times 2^r-1}$ denote the matrix $g(A(r))$. More precisely, 
$M(r)$ is the matrix obtained by replacing $1$ in $A(r)$ into $1/2$. 
For example, $M(2)=\begin{pmatrix} 1/2 &1/2 &0 \\ 1/2 &0 &1/2 \end{pmatrix}$ 
and $M(3)=\begin{pmatrix} 1/2 &1/2 &1/2 &0 &1/2 &0 &0 \\ 1/2 &1/2 &0 &1/2 &0 &1/2 &0 \\ 1/2 &0 &1/2 &1/2 &0 &0 &1/2 \end{pmatrix}.$
\item An {\em $r$-dimensional binary simplex code} is a binary linear code generated by the row vectors of $A(r)$. 
Note that the binary simplex code is equivalently a dual code of {\em Hamming code}. 
\end{itemize}

\begin{rem}Simplex codes play the central role in the paper \cite{BH13} 
for the classification of lattice simplices of dimension $d$ which are not lattice pyramids 
whose $h^*$-polynomials are of the form $1+at^s$, where $d \geq 4$, $a \geq 1$ and $1 < s < (d+1)/2$. 
\end{rem}
Throughout this paper, we will only treat a {\em binary} simplex code, while simplex codes are considered for any finite field. 

Given $r \in \Z_{\geq 0}$, let $B(r+2) \subset \{0,1/2\}^{4 \cdot 2^r-1}$ denote the abelian group 
arising from $(r+2)$-dimensional binary simplex code, 
i.e., $B(r+2)$ is the finite abelian group generated by the row vectors of $M(r+2)$. 
Since it is known that $\wt(\xb)=2^{r+1}$ for any $\xb \in B(r+2) \setminus \{{\bf 0}\}$, 
we see that the sum of all entries of each element in $B(r+2)$ is an integer. 
Thus, we can associate a lattice simplex from $B(r+2)$. Let $\Delta(r+2)$ denote a lattice simplex corresponding to $B(r+2)$. 
Namely, $\Lambda_{\Delta(r+2)}=B(r+2)$. 

We note some properties on $B(r+2)$, or equivalently, $\Delta(r+2)$. 
\begin{itemize}
\item $|B(r+2)|=\vol(\Delta(r+2))=2^{r+2}$. 
\item The $h^*$-polynomial of $\Delta(r+2)$ is of the form $h_{\Delta(r+2)}^*(t)=1+(2^{r+2}-1)t^s$ with $s=2^r$. 
In particular, $\deg ( \Delta(r+2))=2^r$. 
\item $\dim (\Delta(r+2) ) + 1 =2^{r+2} - 1 = 4 \cdot \deg (\Delta(r+2))-1$. 
\end{itemize}

Moreover, we observe the following lemma which we will use in the proof of Lemma \ref{key}: 
\begin{lem}\label{seisitu} 
Given $r \in \Z_{\geq 0}$, let $\Lambda$ be a finite abelian subgroup of $(\R/\Z)^{2^{r+2} -1}$ with $\wt(\Lambda)=2^{r+1}$. 
Let $\xb_1,\ldots,\xb_{r+2} \in \Lambda$ and let $A$ be the $(r+2) \times (2^{r+2}-1)$ matrix 
whose row vectors are those $\xb_1,\ldots,\xb_{r+2}$. Assume that the support matrix of $A$ is equal to $A(r+2)$. 
Then $A=M(r+2)$. 
\end{lem}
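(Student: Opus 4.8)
The plan is to reduce the statement to a single application of Lemma \ref{1/2}. The essential observation is that the combinatorial structure of $A(r+2)$ forces each row $\xb_i$ to attain the maximal weight $2^{r+1}=\wt(\Lambda)$ and forces any two rows to overlap in support in exactly $2^r$ positions; this is precisely the configuration that Lemma \ref{1/2} governs, and it pins down every nonzero entry to be $1/2$.

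First I would record the combinatorics of $A(r+2)$. By definition its $2^{r+2}-1$ columns range over all nonzero vectors of $\F_2^{r+2}$. Fixing a row index $i$, the nonzero vectors having a $1$ in position $i$ number $2^{r+1}$, since the remaining $r+1$ coordinates are free and any such vector is automatically nonzero; hence row $i$ of $A(r+2)$ has exactly $2^{r+1}$ ones. Likewise, for $i\neq j$ the nonzero vectors with a $1$ in both positions $i$ and $j$ number $2^r$. Because the support matrix of $A$ is assumed to equal $A(r+2)$, these counts transfer verbatim to the $\xb_i$: one gets $\wt(\xb_i)=2^{r+1}$ for every $i$ and $|\spp(\xb_i)\cap\spp(\xb_j)|=2^r$ for every pair $i\neq j$.

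Next I would invoke Lemma \ref{1/2} with $M=2^{r+1}$ and $e=2^{r+2}-1$. Here $M$ is even with $M\geq 2$, and $e\geq 3$ since $r\geq 0$, and $\wt(\Lambda)=M$ by hypothesis. For any pair $i\neq j$ we have $\wt(\xb_i)=\wt(\xb_j)=M$ together with $|\spp(\xb_i)\cap\spp(\xb_j)|=2^r=M/2$, so Lemma \ref{1/2} applies and yields that every entry of $\xb_i$ and of $\xb_j$ lies in $\{0,1/2\}$. Since $r+2\geq 2$, every row occurs in at least one such pair, so in fact all entries of every $\xb_i$ lie in $\{0,1/2\}$; equivalently, every nonzero entry of $A$ equals $1/2$. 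Finally, a matrix whose support pattern is $A(r+2)$ and all of whose nonzero entries equal $1/2$ is by the very definition $M(r+2)=g(A(r+2))$, whence $A=M(r+2)$.

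All the computations here are elementary, so there is no serious obstacle; the only substantive point is matching the hypotheses of Lemma \ref{1/2}. The step I would be most careful with is verifying that the pairwise support overlap is \emph{exactly} $2^r=M/2$ and not merely bounded by it, because it is this equality that triggers the lemma and propagates the $\{0,1/2\}$ conclusion to every coordinate of every row.
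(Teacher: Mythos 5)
Your proposal is correct and follows essentially the same route as the paper's own proof: read off $\wt(\xb_i)=2^{r+1}$ and $|\spp(\xb_i)\cap\spp(\xb_j)|=2^r$ from the support matrix $A(r+2)$, then apply Lemma \ref{1/2} to each pair of rows to force all nonzero entries to equal $1/2$. The paper states these two facts without the counting justification you supply, but the argument is the same.
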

Here, the {\em support vector} of a (row or column) vector $(a_1,\ldots,a_e) \in (\R/\Z)^e$ means 
the $(0,1)$-vector $(\epsilon_1,\ldots,\epsilon_e) \in \{0,1\}^{r+2}$ 
such that $\epsilon_i=0$ if $a_i=0$ and $\epsilon_i=1$ if $a_i \neq 0$, 
and the {\em support matrix} of a matrix $A$ means the $(0,1)$-matrix 
whose row (or column) vectors consist of the support vectors of the row (or column) vectors of $A$. 
\begin{proof}[Proof of Lemma \ref{seisitu}]
Since the support matrix of $A$ is equal to $A(r+2)$, we see that 
$\wt(\xb_i)=\wt(\xb_j)=2^{r+1}$ and $|\spp(\xb_i) \cap \spp(\xb_j)|=2^r$ for each $1 \leq i \neq j \leq r+2$. 
Hence, $\xb_i \in \{0,1/2\}^{2^{r+2}-1}$ by Lemma \ref{1/2}. Therefore, one obtains $A=M(r+2)$, as required. 
\end{proof}

\bigskip

\section{Proofs of Theorem \ref{main1} and Theorem \ref{main2}}\label{sec:syoumei}

This section is devoted to giving our proofs of the main results, Theorem \ref{main1} and Theorem \ref{main2}. 

Throughout this section, let $\Delta$ be a lattice simplex of dimension $d$ with degree $s$ satisfying $d+1=f(2s)$ 
which is not a lattice pyramid. We will use the following notation. 
\begin{itemize}
\item Let $r=\lfloor \log_2 s \rfloor$. 
\item From $d+1=f(2s)$, Proposition \ref{Higashitani} (c) guarantees the existence of $\xb_1,\ldots,\xb_{r+2} \in \Lambda_\Delta$ 
with $\bigcup_{i=1}^{r+2}\spp(\xb_i)=\spp(\Lambda_\Delta)$. Let us fix such $\xb_1,\ldots,\xb_{r+2}$. 
(Remark that $M$ in Proposition \ref{Higashitani} is equal to $2s$ by Lemma \ref{lem1}, 
so we have $\lfloor \log_2 M \rfloor + 1=\lfloor \log_2 2s \rfloor +1 = r+2$.) 
\item Let $M \in (\R/\Z)^{(r+2) \times f(2s)}$ be the matrix whose row vectors are $\xb_1,\ldots,\xb_{r+2}$. 
\item Let $\Lambda \subset \Lambda_\Delta$ be the finite abelian subgroup generated by $\xb_1,\ldots,\xb_{r+2}$. 
\end{itemize}

\subsection{Key Lemma}
The following lemma will be the key for the proofs of our main results. 
\begin{lem}[Key Lemma]\label{key}
The matrix $M$ contains $M(r+2)$ as a submatrix. In particular, when $s=2^r$, $M$ coincides with $M(r+2)$. 
\end{lem}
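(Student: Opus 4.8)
The plan is to split the statement into two sub-claims and handle them separately: (A) the support matrix of $M$ contains $A(r+2)$ as a submatrix, i.e.\ every nonzero vector of $\{0,1\}^{r+2}$ occurs as the support vector of some column of $M$; and (B) for at least one column realizing each such support vector, all of its nonzero entries equal $1/2$, so that the selected columns form exactly $M(r+2)$. Granting (A) and (B), the general claim follows by choosing one suitable column per support pattern. For the special case $s=2^r$ one has $f(2s)=2^{r+2}-1$, so there are precisely $2^{r+2}-1$ columns, forcing exactly one column per nonzero pattern; then (A) says that the support matrix of $M$ is literally $A(r+2)$, and since $\wt(\Lambda)=2s=2^{r+1}$ in this case, Lemma~\ref{seisitu} applies verbatim to upgrade this to $M=M(r+2)$.

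For (A), first I would record the equality data coming from Proposition~\ref{Higashitani}(c). Because $d+1=f(2s)$ and $\wt(\Lambda)\le 2s$ by Lemma~\ref{lem1}, every inequality in \eqref{ccc} must be an equality, so the greedy sets satisfy $|I_j|=\lfloor 2s/2^{j-1}\rfloor$ and, by tracing the equality cases of \eqref{eq1} and \eqref{eq2}, one obtains the balanced-splitting identity $|I_{q-1}\cap\spp(\xb_q)|=|I_q|=|I_{q-1}|/2$. To upgrade ``each $\xb_q$ bisects the most recently created region $I_{q-1}$'' into ``each $\xb_q$ meets every earlier region in exactly half'', I would apply the maximality built into the greedy choice not only to the $\xb_q$ themselves but to all partial sums $\sum_{i\in S}\xb_i$, which again lie in $\Lambda$ and are legitimate competitors at each greedy step. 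Comparing the contribution of such a sum against $|I_{q-1}|$ should force the required bisection on each $I_i$ simultaneously; an induction on the number of generators then shows that adjoining $\xb_{r+2}$ refines every already-formed region into two nonempty halves, so that all $2^{r+2}-1$ regions $V_S=\{k:\ k\in\spp(\xb_i)\iff i\in S\}$ are nonempty, which is exactly (A).

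For (B), the natural approach is to pass to a set $T$ of $2^{r+2}-1$ coordinates consisting of one representative from each region $V_S$ and to study the restrictions $\xb_i|_T$. Counting the subsets $S$ with $i\in S$ (respectively with $i,j\in S$) gives $\wt(\xb_i|_T)=2^{r+1}$ and $|\spp(\xb_i|_T)\cap\spp(\xb_j|_T)|=2^r$ for $i\ne j$, which are exactly the hypotheses of Lemma~\ref{1/2}; it then forces every entry of $\xb_i|_T$ into $\{0,1/2\}$, and since the support matrix of the restricted family is $A(r+2)$ by (A), the columns indexed by $T$ form precisely $M(r+2)$. When $s=2^r$ this step is immediate because the ambient weight is already $2^{r+1}$, so Lemma~\ref{seisitu} supplies the conclusion without any restriction.

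The hard part will be twofold. The first difficulty is the promotion inside (A): converting the single balanced-splitting identity $|I_q|=|I_{q-1}|/2$ into control of how $\xb_q$ interacts with \emph{all} previously formed regions at once, and thereby proving that every support pattern is attained rather than merely that the last region is halved. The second, and more delicate, difficulty occurs in the strict range $2^r<s<2^{r+1}$: here $\wt(\xb_1)=2s\neq 2^{r+1}$, so neither Lemma~\ref{1/2} nor Lemma~\ref{seisitu} is directly available for the full configuration, and one must secure the weight bound $\wt(\Lambda|_T)=2^{r+1}$ (or otherwise guarantee the $1/2$ entries on $T$) before the $\{0,1/2\}$ conclusion can be drawn. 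Careful bookkeeping of which coordinates carry the $1/2$ entries is what I expect to make this isolation of the weight-$2^{r+1}$ binary-simplex substructure go through.
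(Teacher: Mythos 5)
Your two-part skeleton --- (A) every nonzero vector of $\{0,1\}^{r+2}$ occurs as the support of a column of $M$, then (B) pass to a transversal of representative columns and upgrade the entries to $1/2$ --- is exactly the skeleton of the paper's proof, but neither part is actually carried out, and the mechanism you propose for (A) cannot work as stated. The equality data from Proposition \ref{Higashitani}(c) only controls how $\xb_q$ interacts with the single most recent residual set $I_{q-1}$; the promotion to \emph{all} regions is the entire content of the lemma, and you leave it at ``should force the required bisection''. Worse, the intermediate statement you aim for (each $\xb_q$ meets every earlier region in exactly half) is false in precisely the range the first sentence of the lemma covers: if every region with $S\neq\emptyset$ were halved at every step, then after all $r+2$ steps each region containing index $1$ would have size $2s/2^{r+1}$, which is not an integer unless $s=2^r$; already for $s=3$ (where $f(6)=10$ is spread over $7$ nonempty sets $A_J$) regions of sizes $1$ and $2$ coexist and no exact bisection can occur. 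Your argument is also tied to a greedy choice of the $\xb_i$, whereas the lemma concerns the arbitrary fixed covering family. The paper's proof of (A) is different, complete, and needs no greedy equality data: each singleton region $A_{\{i\}}$ is nonempty, since otherwise the $r+1$ elements $\xb_j$, $j\neq i$, would cover $\spp(\Lambda_\Delta)$, contradicting the converse half of Proposition \ref{Higashitani}(c); and an arbitrary $J=\{j_1,\ldots,j_t\}$ is reduced to the singleton case by passing to the modified covering family $\yb_i=\xb_{j_1}+\xb_i$ for $i\in J\setminus\{j_1\}$ and $\yb_i=\xb_i$ otherwise, whose $j_1$-singleton region is shown to lie inside $A_J$. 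This substitution trick is the rigorous form of your ``partial sums as competitors'' idea, but it is deployed as a reduction between regions, not as extra inequalities inside the greedy process; it is the idea your proposal is missing.

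For (B), your computation on the transversal ($\wt(\xb_i|_T)=2^{r+1}$, pairwise support intersections $2^r$) coincides with the paper's, which at this point invokes Lemma \ref{seisitu}. You are right that for $2^r<s<2^{r+1}$ the ambient bound $\wt=2^{r+1}$ required by Lemma \ref{seisitu} (equivalently by Lemma \ref{1/2}) is not automatic for the restricted group, since elements of $\Lambda$ only satisfy $\wt\leq 2s>2^{r+1}$; this is a genuine subtlety which the paper itself passes over without comment. But flagging the difficulty and saying you ``expect'' careful bookkeeping to resolve it is not a proof: your proposal stalls exactly where an argument is required. In summary, the attempt has genuine gaps in both halves --- the core combinatorial claim (A) is unproven and its proposed intermediate step is false for $s$ not a power of $2$, and (B) is left open in the non-power-of-two range by your own admission --- so it does not constitute a proof of the lemma.
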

\begin{proof}
For each $\emptyset \neq J \subset [r+2]$, let 
\begin{align}\label{aj}A_J=\bigcap_{j \in J} \spp(\xb_j) \setminus \bigcup_{\ell \in [r+2] \setminus J} \spp(\xb_\ell) \subset [d+1].\end{align} 
First, we claim that $A_J \neq \emptyset$ for any $J$. 
Let $J=\{i\}$ for some $1 \leq i \leq r+2$, i.e., a singleton. Suppose $A_{\{i\}} = \emptyset$, 
i.e., $\spp(\xb_i) \subset \bigcup_{\substack{1 \leq j \leq r+2 \\ j \neq i}} \spp(\xb_j)$. 
Then we have $\bigcup_{j=1}^{r+2} \spp(\xb_j)=\bigcup_{\substack{1 \leq j \leq r+2 \\ j \neq i}} \spp(\xb_j) = [d+1]$, 
a contradiction to Proposition \ref{Higashitani} (c). Hence, $A_{\{i\}} \neq \emptyset$. 
Let $J=\{j_1,\ldots,j_t\} \subset [r+2]$, where $1 \leq j_1 < \cdots < j_t \leq r+2$. We set 
\begin{align*}
\yb_i=\begin{cases}
\xb_{j_1}+\xb_i, \;\;&\text{ if }i \in \{j_2,\ldots,j_t\}, \\
\xb_i, &\text{ otherwise} 
\end{cases}
\end{align*}
for $i=1,\ldots,r+2$. Then we see that 
\begin{align*}
\spp(\Lambda_\Delta) &\supset \bigcup_{j=1}^{r+2}\spp(\yb_j)=\bigcup_{i=2}^t\spp(\xb_{j_1}+\xb_{j_i}) \cup \bigcup_{\ell \not\in \{j_2,\ldots,j_t\}} \spp(\xb_\ell) \\
&\supset \bigcup_{i=2}^t(\spp(\xb_{j_1}) \cup \spp(\xb_{j_i}) \setminus \spp(\xb_{j_1}) \cap \spp(\xb_{j_i})) \cup \bigcup_{\ell \not\in \{j_2,\ldots,j_t\}} \spp(\xb_\ell) \\
&=\bigcup_{i=1}^{r+2} \spp(\xb_i) = \spp(\Lambda_\Delta). 
\end{align*}
Thus, $\bigcup_{j=1}^{r+2}\spp(\yb_j)=\spp(\Lambda_\Delta)$. Hence, by the above discussion, we have $\spp(\yb_{j_1}) \setminus \bigcup_{i \neq j_1}\spp(\yb_i) \neq \emptyset$. 
Moreover, we also see that 
\begin{align*}\emptyset&\neq\spp(\yb_{j_1}) \setminus \bigcup_{i \neq j_1}\spp(\yb_i)
=\spp(\xb_{j_1}) \setminus \left(\bigcup_{i =2}^t\spp(\xb_{j_1}+\xb_{j_i}) \cup \bigcup_{\ell \not\in \{j_1,\ldots,j_t\}} \spp(\xb_\ell)\right)\\
&\subset\spp(\xb_{j_1}) \setminus \left(\bigcup_{i=2}^t(\spp(\xb_{j_1}) \cup \spp(\xb_{j_i}) \setminus \spp(\xb_{j_1}) \cap \spp(\xb_{j_i})) \cup \bigcup_{\ell \not\in \{j_1,\ldots,j_t\}} \spp(\xb_\ell) \right) \\
&=\bigcap_{j \in J}\spp(\xb_j) \setminus \bigcup_{\ell \not\in J}\spp(\xb_\ell)=A_J. 
\end{align*}

Hence, we conclude that $A_J \neq \emptyset$ for any non-empty $J \subset [r+2]$. 
This implies that all non-zero $(0,1)$-vectors of $\{0,1\}^{r+2}$ appear in $M$ as support vectors of its column vectors. 
In other words, $M$ contains a certain submatrix $M'$ whose support matrix is equal to $A(r+2)$. 
Therefore, by Lemma \ref{seisitu}, we obtain that $M'=M(r+2)$, as required. 
\end{proof}

\subsection{Proofs of Theorem \ref{main1} and Theorem \ref{main2}} 

We are now in the position to prove Theorem \ref{main1} and Theorem \ref{main2}. 
\begin{proof}[Proof of Theorem \ref{main1}]
Since $d+1=4s-1$, Proposition \ref{Higashitani} (b) says that $s=2^r$ for some $r \in \Z_{\geq 0}$. Moreover, 
Lemma \ref{key} says that $\Lambda$ is nothing but $B(r+2)$. 
Hence, it suffices to show that $\Lambda_\Delta \setminus \Lambda = \emptyset$. 

\noindent
{\bf (The first step)}: Let $A_J$ be the same as \eqref{aj} for $\emptyset \neq J \subset [r+2]$. 
Then we saw that $A_J \neq \emptyset$. Moreover, we can also see by definition that 
$A_J \cap A_{J'} = \emptyset$ for any $J,J' \subset [r+2]$ with $J \neq J'$. 
Hence, one has $\bigsqcup_{\emptyset \neq J \subset [r+2]} A_J=[2^{r+2}-1]=[d+1]$ and each $A_J$ should be a singleton. 

\noindent
{\bf (The second step)}: We will claim that if $\Lambda \subsetneq \Lambda_\Delta$, then 
\begin{align}\label{claim} 
\wt(\xb')=2s \text{ and } \xb' \in \{0,1/2\}^{d+1} \text{ for any }\xb' \in \Lambda_\Delta \setminus \Lambda. \end{align}
Let $\xb' \in \Lambda_\Delta \setminus \Lambda$ and fix $a \in \spp(\xb')$. Then, by the first step, 
there exists a unique $J = \{j_1,\ldots,j_t\} \subset [r+2]$ such that $A_J=\{a\}$. 

For $i=1,\ldots,r+2$, let 
\begin{align*}
\yb_i=\begin{cases}
\xb_{j_1}+\xb_i, \;\;&\text{ if }i \in \{j_2,\ldots,j_t\}, \\
\xb_i, &\text{ otherwise}. 
\end{cases}
\end{align*}
Then $\spp(\yb_{j_1}) \setminus \bigcup_{i \neq j_1}\spp(\yb_i)=A_J=\{a\}$. (See the proof of Lemma \ref{key}.) 
Let us consider the finite abelian subgroup generated by $\yb_1,\ldots,\yb_{j_1-1},\xb',\yb_{j_1+1},\ldots,\yb_{r+2}$, denoted by $\Lambda'$. 
Then it follows that $\bigcup_{i \neq j_1}\spp(\yb_i) \cup \spp(\xb')=\bigcup_{i=1}^{r+2}\spp(\xb_i)=\spp(\Lambda_\Delta)$. 
Hence, applying the same discussion as in the proof of Lemma \ref{key}, 
we obtain that $\Lambda'$ is equal to $B(r+2)$. In particular, one sees that $\wt(\xb')=2^{r+1}=2s$ and $\xb' \in \{0,1/2\}^{d+1}$. 

\noindent{\bf (The third step)}: 
Finally, suppose that there exists $\xb' \in \Lambda_\Delta$ with $\xb' \not\in \Lambda$. 
Then $\wt(\xb')=2s$ by \eqref{claim}. Moreover, for each $\xb \in \Lambda$, since $\xb+\xb' \in \Lambda_\Delta \setminus \Lambda$, 
we also have $\wt(\xb+\xb')=2s$ by \eqref{claim} again. From $\xb,\xb' \in \{0,1/2\}^{d+1}$, we see that $\wt(\xb+\xb')=\wt(\xb)+\wt(\xb')-2|\spp(\xb) \cap \spp(\xb')|$, 
and hence, we obtain $|\spp(\xb) \cap \spp(\xb')|=s$ for any $\xb \in \Lambda \setminus \{{\bf 0}\}$. 
Thus, $$\sum_{\xb \in \Lambda \setminus \{{\bf 0}\}} |\spp(\xb) \cap \spp(\xb')| = s (|B(r+2)|-1)=s(4s-1).$$
On the other hand, for each $a \in [4s-1]$, there exists a unique $J \subset [r+2]$ such that $A_J=\{a\}$ by the first step. 
For $\yb=\xb_{i_1}+\cdots+\xb_{i_\ell}$, where $1 \leq i_1<\cdots < i_\ell \leq r+2$, 
we see that $a \in \spp(\yb)$ if and only if $|\{i_1,\ldots,i_\ell\} \cap J|$ is odd. 
Since there are exactly $2s$ such $\yb$'s in $\Lambda \cong B(r+2)$, we obtain that $\Lambda$ is $2s$-fold covered by the non-zero elements in $\Lambda$. 
This implies that $\sum_{\xb \in \Lambda \setminus \{{\bf 0}\}} |\spp(\xb) \cap \spp(\xb')|$ should be divisible by $2s$. 
However, $s(4s-1)$ is not divisible by $2s$, a contradiction. 
\end{proof}

Actually, Theorem \ref{main1} is already known for the cases $r=0$ and $r=1$: 
\begin{rem}[{the case $r=0$: \cite{BN}}]\label{r=0}
It is shown in \cite[Theorem 2.5]{BN} that every lattice polytope with degree at most $1$ is 
either {\em Lawrence prism} or {\em an exceptional simplex} (see \cite{BN} for the detail). In particular, 
a lattice {\em simplex} with degree $1$ which is not a lattice pyramid 
is either a lattice segment $[0,a] \subset \R$ (with $a \geq 2$) or $\con(\{(0,0),(2,0),(0,2)\}) \subset \R^2$. 
From this result, we see that Theorem \ref{main1} is true for $r=0$ (i.e. $s=1$). 
In fact, $\con(\{(0,0),(2,0),(0,2)\})$ is unimodularly equivalent to $\Delta(2)$. 
\end{rem}
\begin{rem}[the case $r=1$: \cite{HH}]
In the upcoming paper \cite{HH}, the lattice simplices with degree $2$ which are not lattice pyramids will be completely characterized. 
It will be shown in that paper that the lattice simplex of dimension $6$ with degree $2$ which is not a lattice pyramid 
is unimodularly equivalent to $\Delta(3)$. 
\end{rem}

\begin{proof}[Proof of Theorem \ref{main2}]
We prove the assertion by induction on $r$. 
The case $r=0$ was already proved by Batyrev--Nill \cite{BN} (see Remark \ref{r=0}). Thus, we assume $r \geq 1$. 
Moreover, we notice that $f(2s)=2^{r+2}-1$ holds if and only if $s=2^r$, which is the case of Theorem \ref{main1}. 
Hence, we also assume $f(2s)=d+1>2^{r+2}-1$. 

First, we claim that $\Lambda \subset \{0,1/2\}^{d+1}$. Since $d+1=f(2s)$, 
we see from Lemma \ref{key} that $M$ contains $M(r+2)$ as a submatrix. 
Let $I \subset [d+1]$ be a set of the indices corresponding to the columns of such submatrix $M(r+2)$. 
Note that $|I|=2^{r+2}-1$ and $(x_i)_{i \in I} \in \{0,1/2\}^{2^{r+2}-1}$ 
and $\sum_{i \in I} x_i = 2^r$ for any $\xb=(x_1,\ldots,x_{d+1}) \in \Lambda \setminus \{{\bf 0}\}$. 
Let $\Lambda^c$ be the restriction of $\Lambda$ to the complement of $I$ (i.e. $[d+1] \setminus I \neq \emptyset$). 
Since $\deg(\Lambda) = s$ and $\deg(B(r+2))=2^r$, we have $\deg(\Lambda^c) = s-2^r$. Notice that $\lfloor \log_2(s-2^r)\rfloor \leq r-1$. 
Moreover, $|\spp(\Lambda^c)|=|\spp(\Lambda)| - (2^{r+2}-1)=f(2s)-(2^{r+2}-1)$. In addition, 
$$f(2(s-2^r))=\sum_{n=0}^\infty \left\lfloor \frac{2(s-2^r)}{2^n} \right\rfloor = f(2s)-(2^{r+2}-1)=|\spp(\Lambda^c)|.$$
Thus, by the inductive hypothesis, we obtain that $\Lambda^c \subset \{0,1/2\}^{f(2s)-(2^{r+2}-1)}$, 
i.e., one has $(x_i)_{i \in [d+1] \setminus I} \in \{0,1/2\}^{f(2s)-(2^{r+2}-1)}$ for any $(x_1,\ldots,x_{d+1}) \in \Lambda_\Delta$. 
Therefore, $\Lambda \subset \{0,1/2\}^{d+1}$. In particular, $\xb_i \in \{0,1/2\}^{d+1}$ for each $i$.

Our remaining task is to prove that $\Lambda_\Delta \setminus \Lambda \subset \{0,1/2\}^{d+1}$. 
Assume that $\Lambda_\Delta \setminus \Lambda \neq \emptyset$ and take some $\xb'=(x_1',\ldots,x_{d+1}') \in \Lambda_\Delta \setminus \Lambda$. 
For each $\emptyset \neq J \subset [r+2]$, let us consider 
\begin{align*}\widetilde{\xb_i}=\begin{cases}
\xb_i+\xb', \;\;&\text{if }i \in J, \\
\xb_i, \;&\text{if }i \not\in J. 
\end{cases}\end{align*}
Let $A_J$ be the same as \eqref{aj} 
and let $B'=\{ j \in \spp(\xb') : x_j'=1/2\}$. 
Since $\bigcup_{i=1}^{r+2} \spp(\xb_i)=[d+1]$, we see that $$\bigcup_{i=1}^{r+2}\spp(\widetilde{\xb_i}) = [d+1] \setminus A_J \cap B'.$$
Suppose that $\bigcup_{i=1}^{r+2} \spp(\widetilde{\xb_i}) \subsetneq [d+1]$ for any $\emptyset \neq J \subset [r+2]$. 
Namely, $A_J \cap B' \neq \emptyset$ for any $J$. Since $A_J \cap A_{J'} = \emptyset$ for $J \neq J'$, 
we see that $|B'| \geq 2^{r+2}-1$. This implies that $$\height(\xb') \geq \frac{1}{2}\cdot |B'| \geq \frac{2^{r+2}-1}{2}=2^{r+1}-\frac{1}{2}.$$ 
Hence $\height(\xb') \geq 2^{r+1}$. However, $\height(\xb') \leq s$ and $r= \lfloor \log_2 s \rfloor$, a contradiction. 
Therefore, there exists $J \subset [r+2]$ such that $\bigcup_{i=1}^{r+2} \spp(\widetilde{\xb_i}) = [d+1]$. 
Fix such $J$ and let $\widetilde{\Lambda}$ be the subgroup generated by $\widetilde{\xb_1},\ldots,\widetilde{\xb_{r+2}}$. 
By applying the same discussions as above for $\widetilde{\Lambda}$ instead of $\Lambda$, we obtain that $\widetilde{\xb_i} \in \{0,1/2\}^{d+1}$. 
In particular, we conclude that $\xb' \in \{0,1/2\}^{d+1}$, as required. 
\end{proof}

\bigskip

\section{Counterexamples for Cayley Conjecture}\label{sec:counter}

In this section, we disprove Cayley conjecture (Conjecture \ref{yosou}). 
More concretely, we show that for any given $s \geq 2$, there exists a lattice simplex $\Delta$ of dimension $d$ with degree $s$ satisfying $d > 2s$ 
such that $\Delta$ cannot be decomposed into at least $(d+1-2s)$ lattice polytopes as Cayley polytopes (Proposition \ref{hanrei}). 
We also suggest a ``modified'' version of Cayley conjecture (Conjecture \ref{modify}) 
and show that any lattice simplex of dimension $(4s-2)$ with degree $s$ which is not a lattice pyramid 
satisfies this conjecture.

\subsection{Cayley decomposition for lattice simplices and $\Lambda_\Delta$}
First, we see the following proposition. 
This might be known as a folklore and the author knew this by the personal communication with Johannes Hofscheier. 
\begin{prop}\label{cayley}
A lattice simplex $\Delta \subset \R^d$ of dimension $d$ can be written as a Cayley sum of $\ell$ lattice simplices 
if and only if there are $\ell$ non-empty sets $A_1,\ldots,A_\ell$ with $\bigsqcup_{i=1}^\ell A_i = [d+1]$ such that 
$\sum_{j \in A_i}x_j$ is an integer for any $\xb=(x_1,\ldots,x_{d+1}) \in \Lambda_\Delta$ and $i=1,\ldots,\ell$. 
\end{prop}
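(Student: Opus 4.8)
The plan is to translate the Cayley condition into a statement about integer linear functionals on the lattice $\Z^{d+1}$, exploiting a cokernel description of $\Lambda_\Delta$. Writing $\widetilde{\mathbf{v}}_j=({\bf v}_j,1)\in\Z^{d+1}$ and letting $V$ be the $(d+1)\times(d+1)$ integer matrix with columns $\widetilde{\mathbf{v}}_1,\dots,\widetilde{\mathbf{v}}_{d+1}$, the defining conditions $\sum_i x_i{\bf v}_i\in\Z^d$ and $\sum_i x_i\in\Z$ say exactly that a real lift $\xb$ of a class in $\Lambda_\Delta$ satisfies $V\xb\in\Z^{d+1}$; hence $\xb\mapsto V\xb \bmod V\Z^{d+1}$ identifies $\Lambda_\Delta\cong\Z^{d+1}/V\Z^{d+1}$. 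Under this identification the functional $\xb\mapsto\sum_{j\in A_i}x_j \bmod\Z$ becomes $\mathbf{y}\mapsto\langle (V^{T})^{-1}\mathbf{1}_{A_i},\mathbf{y}\rangle \bmod\Z$, where $\mathbf{1}_{A_i}\in\{0,1\}^{d+1}$ is the indicator vector of $A_i$. The central observation I would isolate as a lemma is therefore: $\sum_{j\in A_i}x_j\in\Z$ for every $\xb\in\Lambda_\Delta$ if and only if $(V^{T})^{-1}\mathbf{1}_{A_i}\in\Z^{d+1}$, i.e. there is an integer vector $\mathbf{u}_i\in\Z^{d+1}$ with $\langle\widetilde{\mathbf{v}}_j,\mathbf{u}_i\rangle=\mathbf{1}_{A_i}(j)$ for all $j$.

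For the forward direction I would argue directly on $\Lambda$ of a Cayley sum. If $\Delta=P_1*\cdots*P_\ell$ with the vertices of $P_i$ indexed by $A_i$, then each vertex of $\Delta$ has the form $({\bf e}_i,\,\mathbf{w})$ with ${\bf e}_i\in\R^\ell$. Splitting the integrality condition $\sum_j x_j\widetilde{\mathbf{v}}_j\in\Z^{d+1}$ into its first $\ell$ coordinates and the rest, the first $\ell$ coordinates read precisely $\sum_{j\in A_i}x_j\in\Z$ for each $i$; so the partition induced by the summands satisfies the required condition, and the height condition $\sum_j x_j\in\Z$ is then automatic.

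For the reverse direction I would assemble the integer vectors $\mathbf{u}_1,\dots,\mathbf{u}_\ell$ provided by the lemma into the lattice map $\Pi\colon\Z^{d+1}\to\Z^{\ell}$, $\Pi(\mathbf{y})=(\langle\mathbf{u}_1,\mathbf{y}\rangle,\dots,\langle\mathbf{u}_\ell,\mathbf{y}\rangle)$. Since the $A_i$ partition $[d+1]$ one gets $\Pi(\widetilde{\mathbf{v}}_j)={\bf e}_{i(j)}$, where $i(j)$ is the block containing $j$; because every block is non-empty, $\Pi$ is a lattice surjection and it carries $\con(\widetilde{\mathbf{v}}_1,\dots,\widetilde{\mathbf{v}}_{d+1})$ onto the unimodular simplex $\con({\bf e}_1,\dots,{\bf e}_\ell)$. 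Moreover $\sum_i\mathbf{u}_i=(0,\dots,0,1)$, the functional that is constantly $1$ on every $\widetilde{\mathbf{v}}_j$, so $\Pi$ respects the height-$1$ slice and descends to an affine lattice projection of $\Delta$ onto a unimodular $(\ell-1)$-simplex. Invoking the projection characterization of Cayley sums recalled in the introduction then exhibits $\Delta$ as a Cayley sum of $\ell$ lattice polytopes, which are simplices because $\Delta$ is.

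The main obstacle I anticipate is the reverse direction, and specifically the lemma: passing from the abstract vanishing of $\xb\mapsto\sum_{j\in A_i}x_j$ on $\Lambda_\Delta$ to the existence of a genuine integer dual vector $\mathbf{u}_i$ requires the cokernel presentation of $\Lambda_\Delta$ together with the fact that $\mathbf{y}$ ranges over all residues of $\Z^{d+1}$ modulo $V\Z^{d+1}$, so that integrality of $\langle(V^T)^{-1}\mathbf{1}_{A_i},\mathbf{y}\rangle$ for all such $\mathbf{y}$ forces $(V^T)^{-1}\mathbf{1}_{A_i}\in\Z^{d+1}$. Once this duality is set up cleanly, checking that $\Pi$ is surjective and lands on the standard simplex is routine.
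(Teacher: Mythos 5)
Your proposal is correct; its ``only if'' direction coincides with the paper's, while the ``if'' direction takes a genuinely different, dual route, so a comparison is worthwhile. For ``only if'', the paper transports $\xb\in\Lambda_\Delta$ through a lattice map $\phi:\Z^d\to\Z^{d+1}$ realizing the Cayley structure and reads off the block sums as the first $\ell$ coordinates of the integer vector $\sum_j x_j\phi({\bf v}_j)$; this is exactly your computation, and it is also the precise form of the step you leave implicit, namely that $\Lambda_\Delta$ may be computed in the Cayley presentation (where, as a bookkeeping point, the lifted vertices lie in $\Z^{d+2}$, not $\Z^{d+1}$). For ``if'', the paper invokes the realization of \cite{BH13}: $\Delta$ is the standard simplex $\con(\{\eb_1,\ldots,\eb_{d+1}\})$ with respect to the superlattice $N=\pi^{-1}(\Lambda_\Delta)\supseteq\Z^{d+1}$; choosing one index $i_j$ from each block $A_j$, the face $\Delta'=\con(\{\eb_{i_1},\ldots,\eb_{i_\ell}\})$ is shown to be unimodular, since any element of $\Lambda_{\Delta'}$ extends by zero to an element of $\Lambda_\Delta$ which, having all coordinates in $[0,1)$, at most one nonzero coordinate per block, and integral block sums, must vanish. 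You instead identify $\Lambda_\Delta\cong\Z^{d+1}/V\Z^{d+1}$ via the vertex matrix $V$ and prove that integrality of the block functional on $\Lambda_\Delta$ is equivalent to ${\bf 1}_{A_i}=V^{T}{\bf u}_i$ with ${\bf u}_i\in\Z^{d+1}$; the surjectivity of $\Lambda_\Delta\to\Z^{d+1}/V\Z^{d+1}$, which you rightly flag as the crux, plays exactly the role that zero-extension plays in the paper's argument. What the paper's route buys is brevity, given that it leans on the \cite{BH13} machinery throughout anyway; but it leaves the final passage from unimodularity of $\Delta'$ to the Cayley decomposition of $\Delta$ implicit. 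Your route makes that passage explicit: the ${\bf u}_i$ assemble into a lattice projection sending $\Delta$ onto a unimodular simplex with $\ell$ vertices (of dimension $\ell-1$; you correctly avoid the off-by-one in the paper's parenthetical characterization of Cayley sums), the fibers over its vertices are faces of $\Delta$ and hence simplices, and your key duality lemma treats both directions symmetrically and is a clean, reusable statement in its own right.
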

\begin{proof}
Let ${\bf v}_1,{\bf v}_2,\ldots,{\bf v}_{d+1}$ be the vertices of $\Delta$. 

\noindent{\bf ``If'' part}: 
Recall from \cite[Section 2]{BH13} that for a finite abelian subgroup $\Lambda \in {\mathcal A}(d)$, 
the associated simplex $\Delta_\Lambda$ (i.e., $\Lambda_{\Delta_\Lambda}=\Lambda$) 
can be constructed as follows: Let $\pi:\R^{d+1} \rightarrow (\R/\Z)^{d+1}$ be the natural surjection and 
let $N=\pi^{-1}(\Lambda)$. Then $N$ is a lattice containing $\Z^{d+1}$. 
Let $\eb_1,\ldots,\eb_{d+1}$ be the standard basis for $\Z^{d+1}$ and let $\Delta_\Lambda=\con(\{\eb_1,\ldots,\eb_{d+1}\})$ 
be a lattice simplex with respect to the lattice $N$. Then $\Delta_\Lambda$ is a desired simplex. 

Let $A_1,\ldots,A_\ell$ be non-empty subsets of $[d+1]$ such that $\bigsqcup_{i=1}^\ell A_i = [d+1]$ 
satisfying that $\sum_{j \in A_i}x_j$ is an integer for any $\xb=(x_1,\ldots,x_{d+1}) \in \Lambda_\Delta$ and $i=1,\ldots,\ell$.  
We regard $\Delta$ as a lattice simplex $\con(\{\eb_1,\ldots,\eb_{d+1}\})$ with respect to $\pi^{-1}(\Lambda_\Delta)$. 
Fix $i_j \in A_j$ for each $1 \leq j \leq \ell$ and consider $\Delta'=\con(\{\eb_{i_1},\ldots,\eb_{i_\ell}\})$. 
Given $\xb'=(x_{i_1}',\ldots,x_{i_\ell}') \in \Lambda_{\Delta'}$, we define $\xb=(x_1,\ldots,x_{d+1}) \in (\R/\Z)^{d+1}$ 
by $x_i=0$ if $i \not\in \{i_1,\ldots,i_\ell\}$ and $x_{i_j}=x_{i_j}'$ for $j=1,\ldots,\ell$. 
Then $\xb \in \Lambda_\Delta$. Since $0 \leq x_i < 1$ and $\sum_{j \in A_i}x_j \in \Z$ for each $i$, 
we see that $x_{i_j}=0$ for each $j$, i.e., $\spp(\xb')=\emptyset$. 
Hence $\Lambda_{\Delta'}$ is trivial, i.e., $\Delta'$ is a unimodular simplex. 
Therefore, $\Delta$ can be written as a Cayley sum of $\ell$ lattice simplices. 

\noindent{\bf ``Only if'' part}: 
Assume that $\Delta$ can be written as a Cayley sum of $\ell$ lattice simplices. 
Then there is a linear lattice transformation $\phi:\Z^d \rightarrow \Z^{d+1}$ such that 
\begin{align*}
{\bf v}_1 \mapsto \eb_1+{\bf w}_1, \ldots, {\bf v}_{c_1} \mapsto \eb_1+{\bf w}_{c_1}, 
&{\bf v}_{c_1+1} \mapsto \eb_2+{\bf w}_{c_1+1},\ldots,{\bf v}_{c_2} \mapsto \eb_2+{\bf w}_{c_2}, \\
\ldots, &{\bf v}_{c_{\ell-1}+1} \mapsto \eb_\ell+{\bf w}_{c_{\ell-1}+1},\ldots,{\bf v}_{c_\ell} \mapsto \eb_\ell+{\bf w}_{c_\ell}
\end{align*}
for some $1 \leq c_1 < \cdots < c_\ell \leq d+1$ (after reordering ${\bf v}_1,\ldots,{\bf v}_{d+1}$ if necessary), 
where ${\bf w}_j$ is a lattice point in $\Z^{d+1}$ such that the first $\ell$ entries are all $0$. 

Take $\xb=(x_1,\ldots,x_{d+1}) \in \Lambda_\Delta$ arbitrarily. 
Then $\sum_{j=1}^{d+1} x_j\phi({\bf v}_j) \in \Z^{d+1}$ by $\sum_{j=1}^{d+1} x_j{\bf v}_j \in \Z^d$, 
and $\sum_{j=1}^{d+1}x_j \in \Z_{\geq 0}$. Thus, 
\begin{align*}
\sum_{j=1}^{d+1} x_j\phi({\bf v}_j) = \sum_{i=1}^\ell \left(\sum_{j=c_{i-1}+1}^{c_i} x_j({\bf e}_i+{\bf w}_j) \right) 
=\sum_{i=1}^\ell \left(\sum_{j=c_{i-1}+1}^{c_i} x_j \right) {\bf e}_i + \sum_{j=1}^{d+1} x_j{\bf w}_j \in \Z^{d+1}, 
\end{align*}
where $c_0=0$. Hence, we obtain $\sum_{j=c_{i-1}+1}^{c_i} x_j \in \Z_{\geq 0}$ for each $i=1,\ldots,\ell$, as required. 
\end{proof}

\subsection{$\Delta(r+2)$ and Cayley conjecture}

For a lattice polytope $P$, let 
$$C(P)=\max\{\ell : P \text{ can be written as a Cayley polytope of $\ell$ polytopes}\}.$$ 
Conjecture \ref{yosou} says that for a lattice polytope $P$ of dimension $d$ with degree $s$, 
the inequality $C(P) \geq d+1-2s$ might hold if $d >2s$.

\begin{lem}\label{deruta}
Let $\Delta(r+2)$ be a lattice simplex as in Section \ref{sec:code} for $r \in \Z_{\geq 0}$. Then 
$$C(\Delta(r+2)) \leq \left\lfloor\frac{2^{r+2}-1}{3}\right\rfloor.$$
\end{lem}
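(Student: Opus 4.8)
The plan is to reduce the geometric quantity $C(\Delta(r+2))$ to a purely combinatorial count via Proposition \ref{cayley}, and then to exploit the coding-theoretic structure of $B(r+2)$. First I would observe that, since $\Delta(r+2)$ is a simplex, every expression of it as a Cayley polytope of $\ell$ lattice polytopes is automatically a Cayley sum of $\ell$ lattice \emph{simplices}: each summand is the convex hull of a subset of the $d+1$ affinely independent vertices of $\Delta(r+2)$, hence is itself a simplex. Therefore $C(\Delta(r+2))$ equals the largest $\ell$ for which $[2^{r+2}-1]$ admits a partition into nonempty blocks $A_1,\ldots,A_\ell$ such that $\sum_{j\in A_i}x_j\in\Z$ for every $\xb=(x_1,\ldots,x_{d+1})\in\Lambda_{\Delta(r+2)}=B(r+2)$ and every $i$, by Proposition \ref{cayley}.

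Next I would translate the integrality condition. Since $B(r+2)\subset\{0,1/2\}^{2^{r+2}-1}$, for such an $\xb$ we have $\sum_{j\in A_i}x_j=\tfrac12\,|A_i\cap\spp(\xb)|$, so the condition $\sum_{j\in A_i}x_j\in\Z$ is exactly that $|A_i\cap\spp(\xb)|$ is even for all $\xb\in B(r+2)$. Identifying $B(r+2)$ with the $(r+2)$-dimensional binary simplex code $C$ via the map $g$, and writing $\chi_{A_i}\in\{0,1\}^{2^{r+2}-1}$ for the indicator vector of $A_i$, this says precisely that $\chi_{A_i}$ is orthogonal over $\F_2$ to every codeword of $C$, i.e. $\chi_{A_i}$ lies in the dual code $C^\perp$, the Hamming code.

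The crux of the argument, and the step I expect to be the main obstacle, is to show $|A_i|\geq 3$ for every block $A_i$ (equivalently, that the Hamming code $C^\perp$ has minimum distance $3$). I would argue directly from the fact that the columns of $A(r+2)$ are pairwise distinct and nonzero, since they run over all $2^{r+2}-1$ nonzero vectors of $\F_2^{r+2}$. If $A_i=\{j\}$ were a singleton, the condition would force coordinate $j$ to lie outside $\spp(\xb)$ for all $\xb$, contradicting $\spp(B(r+2))=[2^{r+2}-1]$ (as $\Delta(r+2)$ is not a lattice pyramid). If $A_i=\{j,k\}$, then writing each $\xb\in B(r+2)$ as $g(c)$ with $c=\sum_t\lambda_t\,\mathrm{row}_t(A(r+2))$, we have $j\in\spp(\xb)\iff\langle\lambda,\mathrm{col}_j\rangle=1$ over $\F_2$; the evenness condition then forces $\langle\lambda,\mathrm{col}_j+\mathrm{col}_k\rangle=0$ for every $\lambda$, i.e. $\mathrm{col}_j=\mathrm{col}_k$, contradicting distinctness. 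Hence $|A_i|\geq 3$.

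Finally I would sum over the blocks of the partition: since $\bigsqcup_{i=1}^\ell A_i=[2^{r+2}-1]$, we get $2^{r+2}-1=\sum_{i=1}^\ell|A_i|\geq 3\ell$, so $\ell\leq(2^{r+2}-1)/3$. As $\ell$ is an integer, this yields $C(\Delta(r+2))\leq\left\lfloor(2^{r+2}-1)/3\right\rfloor$, as claimed. The two points that require care are the reduction of the polytope Cayley count to the simplex count at the start, and the verification of the minimum-distance-$3$ property that forces $|A_i|\geq 3$.
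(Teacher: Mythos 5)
Your proof is correct and follows essentially the same route as the paper: reduce to Proposition \ref{cayley}, use the $\{0,1/2\}$-structure of $B(r+2)$ together with the distinctness and nonvanishing of the columns of $M(r+2)$ to force every block of the partition to have size at least $3$, and then count. Your coding-theoretic phrasing (the blocks' indicator vectors lie in the Hamming code $C^\perp$, which has minimum distance $3$) and your opening reduction from Cayley polytopes to Cayley sums of simplices are just more explicit versions of steps the paper leaves implicit.
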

\begin{proof}
Consider the matrix $M(r+2)$. Then there is no pair of the same columns. 
Moreover, since $\xb=(x_1,\ldots,x_{d+1}) \in \{0,1/2\}^{d+1}$ for each $\xb \in B(r+2)$, where $d+1=2^{r+2}-1$, 
we see that $x_i+x_j \in \Z$ holds for $i\not=j$ if and only if $x_i=x_j$. 

Hence, if there exists $A$ such that $\sum_{i \in A}x_i \in \Z$ for any $\xb \in \Lambda_\Delta$, then $|A| \geq 3$. 
Therefore, by Proposition \ref{cayley}, we obtain 
$\displaystyle C(\Delta(r+2)) \leq \left\lfloor\frac{d+1}{3}\right\rfloor = \left\lfloor\frac{2^{r+2}-1}{3}\right\rfloor.$ 
\end{proof}

Let $d+1=\dim (\Delta(r+2))+1=2^{r+2}-1$ and $s=\deg (\Delta(r+2))=2^r$. It then follows from Lemma \ref{deruta} that 
\begin{align*}
d+1-2s-C(\Delta(r+2)) \geq 2^{r+2}-1-2^{r+1}-\left\lfloor\frac{2^{r+2}-1}{3}\right\rfloor \geq \frac{2^{r+1}-2}{3}. 
\end{align*}
Thus, $d+1-2s > C(\Delta(r+2))$ when $r \geq 1$. This implies that $\Delta(r+2)$ does not satisfy Conjecture \ref{yosou}. 
More generally, we see the following: 
\begin{prop}\label{hanrei}
For any $s \geq 2$, there exists a lattice simplex $\Delta$ of dimension $(f(2s)-1)$ with degree $s$ 
which is not a lattice pyramid such that $d+1-2s > C(\Delta)$. 
Namely, there exists a counterexample of Conjecture \ref{yosou} for any degree $s$ with $s \geq 2$. 
\end{prop}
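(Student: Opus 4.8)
The plan is to extend the computation already carried out for the specific simplices $\Delta(r+2)$ (via Lemma \ref{deruta}) to an arbitrary degree $s \geq 2$. The case $s = 2^r$ is essentially done: when $r \geq 1$ the simplex $\Delta(r+2)$ from Theorem \ref{main1} already furnishes a counterexample, as the displayed inequality preceding the statement shows $d+1-2s > C(\Delta(r+2))$. So the real content is to produce a counterexample when $s$ is \emph{not} a power of $2$, and to handle the remaining edge cases $s=2,3$ uniformly. First I would fix a degree $s \geq 2$, set $r = \lfloor \log_2 s \rfloor$, and let $\Delta$ be a lattice simplex of dimension $d = f(2s)-1$ with degree $s$ which is not a lattice pyramid; such a $\Delta$ exists because one can take the simplex associated to a suitable abelian group $\Lambda_\Delta \subset \{0,1/2\}^{d+1}$ built from a binary code (for instance by combining copies of the groups $B(\cdot)$ coming from simplex codes), consistent with Theorem \ref{main2}.

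The key structural fact I would invoke is Theorem \ref{main2}: any such $\Delta$ satisfies $\Lambda_\Delta \subset \{0,1/2\}^{d+1}$. This means every coordinate of every element of $\Lambda_\Delta$ is $0$ or $1/2$, so the argument in the proof of Lemma \ref{deruta} applies verbatim: for $i \neq j$ one has $x_i + x_j \in \Z$ if and only if $x_i = x_j$. Consequently any block $A$ with $\sum_{i \in A} x_i \in \Z$ for all $\xb \in \Lambda_\Delta$ must have $|A| \geq 3$ (a singleton would force the corresponding coordinate to be identically $0$, contradicting that $\Delta$ is not a pyramid, and a pair would force two coordinate-functions to coincide on all of $\Lambda_\Delta$, which fails unless two columns of the generating matrix agree). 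By Proposition \ref{cayley}, the number of Cayley summands is the maximal size of such a partition, whence $C(\Delta) \leq \lfloor (d+1)/3 \rfloor = \lfloor f(2s)/3 \rfloor$.

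It then remains to verify the numerical inequality $d+1-2s > C(\Delta)$, i.e.
\begin{align*}
f(2s) - 2s > \left\lfloor \frac{f(2s)}{3} \right\rfloor.
\end{align*}
Using $f(2s) = \sum_{n \geq 0} \lfloor 2s/2^n \rfloor$, I would estimate $f(2s)$ from below and above. A clean bound is $f(2s) \geq 4s - 2^{r+1} + (2^{r+1}-1) = \cdots$; more directly, $f(2s) \geq 3s$ for $s \geq 2$ with the gap $f(2s) - 2s$ growing, so that $\tfrac{2}{3} f(2s) - 2s > 0$, which is exactly what is needed since $d+1-2s - \lfloor (d+1)/3 \rfloor \geq \tfrac{2}{3}f(2s) - 2s$. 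The anticipated main obstacle is precisely this arithmetic: one must confirm that $\tfrac{2}{3}f(2s) > 2s$ holds for every integer $s \geq 2$ (it is false for $s=1$, which matches the hypothesis $s \geq 2$), and the quantity $f(2s)$ behaves irregularly because of the floor functions. I would handle this by writing $f(2s) = 4s - 1 - \varepsilon(s)$ where $\varepsilon(s) = \sum_{n}(2s/2^n - \lfloor 2s/2^n\rfloor)$ is a controlled fractional defect, bounding $\varepsilon(s)$, and checking the resulting inequality, treating the small cases $s = 2,3$ by hand to be safe. Once this inequality is established the proof concludes immediately via Proposition \ref{cayley} and the bound $d > 2s$, which follows from $f(2s) > 2s+1$ for $s \geq 2$.
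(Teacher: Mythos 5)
There is a genuine gap in your argument, and it is not repairable in the form you set it up: you apply the bound $C(\Delta)\leq\lfloor(d+1)/3\rfloor$ of Lemma \ref{deruta} to an \emph{arbitrary} simplex $\Delta$ of dimension $f(2s)-1$ with degree $s$ which is not a lattice pyramid, justifying the step ``every Cayley block has size at least $3$'' by Theorem \ref{main2}. But Theorem \ref{main2} only gives $\Lambda_\Delta\subset\{0,1/2\}^{d+1}$; it does \emph{not} rule out two coordinates that coincide on all of $\Lambda_\Delta$ (i.e., two equal columns of a generator matrix), which is exactly what is needed to exclude blocks of size $2$. Your parenthetical ``which fails unless two columns of the generating matrix agree'' is the unproved point, and it can genuinely fail. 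Concretely, take $s=3$, so $f(2s)=10$, and let $\Lambda_\Delta\subset\{0,1/2\}^{10}$ be generated by the three rows of the matrix whose ten columns are $\tfrac12 v$ with $v$ ranging over all seven nonzero vectors of $\F_2^3$, together with a \emph{second} copy of the three nonzero vectors of the plane $\{v_3=0\}$. One checks that the seven nonzero elements of this group have weight $6$, except the third generator, which has weight $4$; hence all heights are integers, $\dg(\Lambda_\Delta)=3$, and $|\spp(\Lambda_\Delta)|=10$, so this is a lattice simplex of dimension $9=f(6)-1$ with degree $3$ which is not a lattice pyramid, fully consistent with Theorem \ref{main2}. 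Yet the three duplicated pairs of columns are valid Cayley blocks of size $2$, and the remaining four columns $\tfrac12(0,0,1)^t,\tfrac12(1,0,1)^t,\tfrac12(0,1,1)^t,\tfrac12(1,1,1)^t$ have $\F_2$-sum zero, so every element of $\Lambda_\Delta$ has an even number of nonzero entries there and they form a fourth block. By Proposition \ref{cayley}, $C(\Delta)\geq 4=d+1-2s$: this simplex \emph{satisfies} Conjecture \ref{yosou}, so no argument quantified over all simplices of this dimension and degree can prove the proposition.

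The fix is to argue about one explicitly constructed simplex, which is what the paper does: write $2s=2^{u_1}+\cdots+2^{u_p}$ with $u_1>\cdots>u_p\geq1$ and take $\Lambda_\Delta$ generated by the rows of the block-diagonal matrix with blocks $M(u_1+1),\ldots,M(u_p+1)$. For this simplex, $d+1=\sum_{i=1}^p(2^{u_i+1}-1)=f(2s)=4s-p$ and $\dg(\Delta)=\sum_{i=1}^p 2^{u_i-1}=s$, and --- the step your proposal skips --- its columns are pairwise distinct: distinct within each block because they are distinct points of a projective space, and distinct across blocks because their supports meet different row blocks. Only then does the Lemma \ref{deruta} argument give $C(\Delta)\leq\lfloor(4s-p)/3\rfloor$, whence $d+1-2s-C(\Delta)\geq(2s-2p)/3>0$, using $s>p$ for all $s\geq2$. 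Your numerical inequality $f(2s)>3s$ for $s\geq2$ is correct (it is equivalent to $s>p$), but it yields the proposition only when combined with a simplex for which the $\lfloor(d+1)/3\rfloor$ bound has actually been established.
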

\begin{proof}
Consider the binary expansion of $2s$, i.e., let $2s=2^{u_1}+\cdots+2^{u_p}$, where $u_1>\cdots>u_p \geq 1$. 
Then it is easy to see that $\displaystyle f(2s)=\sum_{i=1}^p (2^{u_i+1}-1)=4s-p$. 

Let $M(u_1,\ldots,u_p)$ be the $(\sum_{i=1}^p u_i+p) \times (\sum_{i=1}^p (2^{u_i+1}-1))$ matrix 
obtained by the direct sum of $M(u_1+1),\ldots,M(u_p+1)$, i.e., 
\begin{align*}
M(u_1,\ldots,u_p)=\begin{pmatrix}
M(u_1+1) &         &       &{\bf O} \\
         &M(u_2+1) &       & \\
         &         &\ddots & \\
{\bf O}  &         &       &M(u_p+1) 
\end{pmatrix}. 
\end{align*}
Let $\Delta$ be the lattice simplex associated with the finite abelian group generated by the row vectors of $M(u_1,\ldots,u_p)$. 
Then we see that $\dim \Delta+1$ is equal to the number of columns of $M(u_1,\ldots,u_p)$, which is equal to $f(2s)(=4s-p)$. 
Moreover, we can easily see that 
$\deg (\Delta)=s$. In addition, similar to Lemma \ref{deruta}, we have $\displaystyle C(\Delta) \leq \left\lfloor \sum_{i=1}^p \frac{2^{u_i+1}-1}{3} \right\rfloor.$ 
Thus, \begin{align*}
d+1-2s-C(\Delta) \geq 4s-p-2s-\left\lfloor \sum_{i=1}^p \frac{2^{u_i+1}-1}{3} \right\rfloor 
\geq 2s-p-\frac{4s-p}{3} =\frac{2s-2p}{3}.
\end{align*}
Therefore, $d+1-2s>C(\Delta)$ if $s \geq 2$, as desired. 
\end{proof}

\subsection{Modification of Cayley conjecture}

It turns out that Conjecture \ref{yosou} is not true in general. 
Instead, we suggest a modification of the conjecture as follows: 
\begin{conj}\label{modify}
Let $P \subset \R^d$ be a lattice polytope of dimension $d$ with degree $s$. 
If $d > (17s-4)/6$, then $P$ is a Cayley polytope of at least $(d+1-\lfloor (17s-4)/6 \rfloor)$ lattice polytopes. 
\end{conj}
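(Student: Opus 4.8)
The plan is to reduce the statement to non-pyramidal lattice simplices, where the group $\Lambda_\Delta$ together with Proposition~\ref{cayley} turns the Cayley conjecture into a purely combinatorial partition problem, and then to bound the resulting ``Cayley defect'' by controlling block sizes via the weight bound of Lemma~\ref{lem1}. Since this is a conjecture, I can only outline a strategy and indicate where it remains genuinely open.

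\emph{First reduction.} Set $\delta(P) = \dim P + 1 - C(P)$. Because $h^*_P = h^*_{\mathrm{Pyr}(P)}$ preserves $\dg$, while $\dim \mathrm{Pyr}(P) = \dim P + 1$ and $C(\mathrm{Pyr}(P)) = C(P)+1$ (the apex is a free Cayley factor, as one checks directly from the $\Lambda$-description, where it appends an all-zero coordinate), the defect $\delta$ is invariant under passing to a lattice pyramid, and so is $s$. Hence it suffices to prove $\delta(P) \le \lfloor(17s-4)/6\rfloor$ for $P$ not a lattice pyramid; the hypothesis $d > (17s-4)/6$ then merely guarantees that the asserted number $d+1 - \lfloor(17s-4)/6\rfloor$ of Cayley summands is at least $2$, so the conclusion is non-vacuous.

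\emph{Reduction to a partition problem.} For a simplex $\Delta$, Proposition~\ref{cayley} identifies $C(\Delta)$ with the maximal number $\ell$ of blocks in a partition $[d+1] = \bigsqcup_i A_i$ with $\sum_{j \in A_i} x_j \in \Z$ for all $\xb \in \Lambda_\Delta$ and all $i$; the goal is then to bound the defect $\delta(\Delta) = (d+1) - \ell$ by $\lfloor(17s-4)/6\rfloor$. Since $\Delta$ is not a pyramid, Lemma~\ref{lem2} forces every coordinate into the support of some element, so no $A_i$ is a singleton and every block costs at least one ``merge''. I would attempt to build the blocks greedily, using $\wt(\xb) \le 2s$ (Lemma~\ref{lem1}) and the covering structure of Proposition~\ref{Higashitani} to show that integral-sum blocks of size $3$ can be produced for almost all coordinates, keeping the number of merges below $\lfloor(17s-4)/6\rfloor$.

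\emph{Why $17/6$, and the main obstacle.} The extremal simplices $\Delta(r+2)$ and the direct sums $M(u_1,\ldots,u_p)$ of Proposition~\ref{hanrei} have all entries in $\{0,1/2\}$, so a two-element block $\{i,j\}$ is integral only when two columns coincide, which never happens; the minimal integral-sum blocks are the projective ``lines'' $\{a,b,a+b\}$, and a partition into these gives $C = (d+1)/3$ and $\delta = \tfrac{2}{3}(4s-1) = (16s-4)/6$. Thus the known examples realize $\delta = (16s-4)/6$, exactly $s/6$ below the conjectured bound. The hard part will be twofold. First, passing from simplices to \emph{arbitrary} lattice polytopes: the group-theoretic description underlying Proposition~\ref{cayley} is special to simplices, and for polytopes with more than $d+1$ vertices one loses this handle -- precisely the point at which even the original Conjecture~\ref{yosou} is known only under smoothness or the Gorenstein property. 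Second, even for simplices, pinning down the exact constant requires ruling out configurations that force blocks of size $\ge 4$ on a positive density of coordinates while keeping $\dg = s$; since the best construction reaches only $(16s-4)/6$, the first concrete target should be to close this $s/6$ gap, either by exhibiting a worse example or by an averaging argument over $\Lambda_\Delta$ that establishes the sharper bound $\delta \le (16s-4)/6$.
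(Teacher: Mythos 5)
The statement you are addressing is a conjecture; the paper does not prove it in general either. What the paper actually does is (i) calibrate the constant $17/6$ and (ii) verify the conjecture for the extremal simplices $\Delta(r+2)$, using Proposition~\ref{cayley} together with an explicit recursive construction of integral-sum partitions (Lemma~\ref{sita}). Your first two reductions --- pyramid-invariance of the defect $\delta(P)=\dim P+1-C(P)$, and the translation of $C(\Delta)$ into a partition problem on $[d+1]$ via Proposition~\ref{cayley} --- are sound and coincide with the paper's framework.

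However, your discussion of ``why $17/6$'' contains a genuine error that would derail the program you outline. You claim that the extremal examples admit partitions of $[d+1]$ into projective lines, hence realize $C=(d+1)/3$ and $\delta=(16s-4)/6$, leaving an ``$s/6$ gap'' below the conjectured bound, and you propose as a first target to prove the sharper bound $\delta\le(16s-4)/6$. This is false. A partition of the points of $\PP_2^{r+1}$ into lines (a line spread) exists only when $r$ is even; when $r$ is odd one has $d+1=2^{r+2}-1\equiv 1 \pmod 3$, so no partition into triples exists and blocks of size $\ge 4$ are forced. This mod-$3$ obstruction is exactly where the constant $17/6$ comes from: for odd $r$ the paper's Lemma~\ref{sita} partitions $[d+1]$ into $c_r$ triples and $d_r$ quadruples (with $c_r=4c_{r-2}+1$, $d_r=4d_{r-2}$), giving $C(\Delta(r+2))\ge(2^{r+2}-2^{r-1}-1)/3$, and with $s=2^r$ this yields $\delta\le(17s-4)/6$ with zero slack. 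Concretely, for $r=1$ the simplex $\Delta(3)$ has $d+1=7$, $s=2$, and $C(\Delta(3))=2$ exactly, since the construction meets the upper bound $\lfloor 7/3\rfloor=2$ of Lemma~\ref{deruta}; hence $\delta=5=(17\cdot 2-4)/6$, which strictly exceeds $(16\cdot 2-4)/6$. So there is no gap to close --- the conjectured constant is already attained with equality by known examples --- and the sharper bound you set as your first target is refuted by $\Delta(3)$. Any argument along your lines must build in the parity obstruction and the mixed $3$/$4$-block decompositions, which is precisely the content of the paper's Lemma~\ref{sita}.
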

This conjecture is weaker than the original Conjecture \ref{yosou}. 
We will verify that Conjecture \ref{modify} is true for lattice simplices of dimension $(4s-2)$ with degree $s$. 

\begin{lem}\label{sita}
Let $\Delta(r+2)$ be a lattice simplex as before for $r \in \Z_{\geq 0}$. Then 
$$C(\Delta(r+2)) \geq \frac{2^{r+2}-2^{r-1}-1}{3}$$ 
if $r$ is odd and 
$$C(\Delta(r+2))=\frac{2^{r+2}-1}{3}$$ 
if $r$ is even. 
\end{lem}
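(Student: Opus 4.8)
The plan is to reinterpret $C(\Delta(r+2))$ as a purely combinatorial quantity about $\F_2^{r+2}$ via Proposition \ref{cayley}, and then exhibit explicit partitions. By Proposition \ref{cayley}, $C(\Delta(r+2))$ equals the largest number of nonempty blocks in a partition $[2^{r+2}-1]=\bigsqcup_i A_i$ such that $\sum_{j\in A_i}x_j\in\Z$ for every $\xb=(x_1,\dots,x_{2^{r+2}-1})\in B(r+2)$ and every $i$. Since every element of $B(r+2)$ lies in $\{0,1/2\}^{2^{r+2}-1}$, the condition $\sum_{j\in A_i}x_j\in\Z$ is equivalent to $|A_i\cap\spp(\xb)|$ being even for all $\xb\in B(r+2)$. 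Identifying the columns of $M(r+2)$ with the nonzero vectors of $\F_2^{r+2}$ and the elements of $B(r+2)$ with the corresponding linear functionals, $|A_i\cap\spp(\xb)|$ is even for all $\xb$ precisely when $\sum_{v\in A_i}v=\mathbf{0}$ in $\F_2^{r+2}$. Thus I would reduce the lemma to the statement that $C(\Delta(r+2))$ is the maximal number of blocks in a partition of $\F_2^{r+2}\setminus\{\mathbf{0}\}$ into ``zero-sum'' blocks; as in the proof of Lemma \ref{deruta}, each such block has at least three elements.

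For even $r$, the projective space $\PP_2^{r+1}$ has odd dimension, so $\F_2^{r+2}$ admits a partition into $2$-dimensional subspaces (a line spread). The three nonzero vectors of each such subspace form a zero-sum block of size $3$, producing $(2^{r+2}-1)/3$ blocks. This meets the upper bound $\lfloor(2^{r+2}-1)/3\rfloor=(2^{r+2}-1)/3$ of Lemma \ref{deruta}, giving the claimed equality.

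For odd $r$ no line spread exists, and I would instead use a recursive ``layered'' construction along a splitting $\F_2^{r+2}=\F_2^{r}\oplus\F_2^{2}$. On the layer $w=\mathbf{0}$, i.e.\ on $\F_2^{r}\setminus\{\mathbf{0}\}$, apply the construction recursively, the base case $r=1$ being the Fano plane $\PP_2^{2}$, where a direct count gives two blocks. For the three layers with $w\neq\mathbf{0}$, choose a linear map $L\in\mathrm{GL}_{r}(\F_2)$ such that $L+I$ is also invertible, and bundle the points into the triples $\{(u,a),(Lu,b),((I+L)u,a+b)\}$ for $u\in\F_2^{r}$, where $\{a,b,a+b\}=\F_2^{2}\setminus\{\mathbf{0}\}$; each triple is zero-sum, and since $L$ and $I+L$ are bijections these $2^{r}$ triples partition the $3\cdot 2^{r}$ points of the three layers. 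Such an $L$ exists for odd $r\ge 3$ as the companion matrix of an irreducible polynomial of degree $r$ over $\F_2$, which has no root in $\F_2$ and hence no eigenvalue in $\{0,1\}$. Summing the contributions yields at least $(2^{r+2}-2)/3$ blocks, which is at least $(2^{r+2}-2^{r-1}-1)/3$, as required.

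The main obstacle is the odd case: producing a single family of pairwise disjoint zero-sum triples covering the three nonzero-$w$ layers. This is exactly where the linear map $L$ with $L,L+I\in\mathrm{GL}_{r}(\F_2)$ is needed, and where the degenerate base $r=1$ (for which no such $L$ exists, since $\mathrm{GL}_1(\F_2)$ is trivial) must be separated out and handled by the direct Fano-plane count.
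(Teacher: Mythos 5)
Your proposal is correct, and it takes a genuinely different route from the paper. Both arguments start from Proposition \ref{cayley}, but the paper stays at the level of the matrix $M(r+2)$: it uses the recursive block structure \eqref{gyouretsu} to build, by induction in steps of two, a decomposition into $c_r$ triples and $d_r$ quadruples of columns with integer column-sums, governed by the recurrences $c_r=4c_{r-2}+1$, $d_r=4d_{r-2}$ with the Fano-plane base case $c_1=d_1=1$; the quadruples are carried along (and multiplied by four) at every step, which is exactly why the paper's odd-$r$ bound is only $(2^{r+2}-2^{r-1}-1)/3$. You instead translate the problem into finite geometry: a valid block is precisely a zero-sum subset of $\F_2^{r+2}\setminus\{\mathbf{0}\}$ (your equivalence via linear functionals is correct), so for even $r$ a line spread of $\PP_2^{r+1}$ --- which exists since $2 \mid r+2$, e.g.\ via the $\F_4$-structure on $\F_2^{r+2}$ --- gives $(2^{r+2}-1)/3$ triples at once, and for odd $r$ your layered construction recurses only on the $w=\mathbf{0}$ layer while covering the three layers $w\neq\mathbf{0}$ by $2^r$ disjoint zero-sum triples $\{(u,a),(Lu,b),((I+L)u,a+b)\}$, using $L$ with $L,\,I+L\in\mathrm{GL}_r(\F_2)$ (companion matrix of an irreducible polynomial, correctly excluded at $r=1$). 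The payoff is that your construction carries only the single quadruple from the Fano base case, so for odd $r$ you get $N(r)=N(r-2)+2^r$, $N(1)=2$, i.e.\ $(2^{r+2}-2)/3$ blocks; this is strictly better than the lemma's claim for $r\geq 3$ and, since $2^{r+2}\equiv 2 \pmod 3$ for odd $r$, it meets the upper bound $\lfloor (2^{r+2}-1)/3\rfloor$ of Lemma \ref{deruta}. So your argument in fact proves the sharper statement $C(\Delta(r+2))=\lfloor (2^{r+2}-1)/3\rfloor$ for \emph{all} $r$, which the paper's recursion does not achieve in the odd case. The only point to make fully explicit in a final write-up is the existence of the line spread (one line via field reduction) and the direct verification that the Fano plane admits exactly two blocks (a line plus its complement, each zero-sum).
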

Note that $2^{r+2}-2^{r-1}-1 \equiv 0 \mod 3$ for odd $r \in \Z_{\geq 1}$ and 
$2^{r+2}-1 \equiv 0 \mod 3$ for even $r \in \Z_{\geq 0}$. 
\begin{proof}[Proof of Lemma \ref{sita}]
Thanks to Proposition \ref{cayley} and $M(r+2) \in \{0,1/2\}^{(r+2) \times (2^{r+2}-1)}$, 
we may consider a certain decomposition $\bigsqcup_{i=1}^\ell A_i = [d+1]$ 
such that the sum of all $j$th column vectors of $M(r+2)$ for $j \in A_i$ is an integer vector. 

First, let $r$ be odd. We will claim that there is a decomposition of the columns of $M(r+2)$, i.e., 
$[d+1]$ satisfying the certain property above into $(c_r+d_r)$ sets 
$A_1,\ldots,A_{c_r}, B_1,\ldots,B_{d_r}$, where $c_r$ and $d_r$ are defined by the recurrences 
\begin{align*}c_1=1, \; c_r=4c_{r-2}+1 \text{ for }r \geq 3, \quad d_1=1, \;d_r=4d_{r-2} \text{ for }r \geq 3,\end{align*}
respectively, and each $A_i$ consists of three indices and each $B_i$ consists of four indices. 
For example, we may set $A_1=\{2,3,4\}$ and $B_1=\{1,5,6,7\}$ for the case $r=1$ since 
both the sum of 2nd, 3rd and 4th columns and the sum of 1st, 5th, 6th and 7th columns are equal to $(1,1,1)^t$. Note that 
$M(3)=\begin{pmatrix} 1/2 &1/2 &1/2 &0 &1/2 &0 &0 \\ 1/2 &1/2 &0 &1/2 &0 &1/2 &0 \\ 1/2 &0 &1/2 &1/2 &0 &0 &1/2 \end{pmatrix}.$ 
As a consequence of this statement, we can see  by induction on $r$ that 
\begin{align*}
C(\Delta(r+2)) &\geq c_r+d_r \geq 4(c_{r-2}+d_{r-2})+1 = 4\cdot C(\Delta(r)) + 1 \\
&\geq 4\cdot\frac{2^r-2^{r-3}-1}{3}+1 = \frac{2^{r+2}-2^{r-1}-1}{3} 
\end{align*}
for any $r \geq 3$, as required. 
Here, we observe that $M(r+2)$ looks like as follows: 
\begin{align}\label{gyouretsu}\arraycolsep5pt
M(r+2)=\left(
\begin{array}{@{\,}cccc|cccc|cccc|ccc@{\,}}
1/2 &\cdots &1/2 &1/2 &1/2 &\cdots &1/2 &1/2 &0 &\cdots &0 &0 &0 &\cdots &0 \\
1/2 &\cdots &1/2 &1/2 &0 &\cdots &0 &0 &1/2 &\cdots &1/2 &1/2 &0 &\cdots &0 \\
\hline
  &       & &0     & &     & &0     &  &     & &0     &  &       & \\
  \multicolumn{3}{c}{\text{{\LARGE $M(r)$}}}&\vdots &\multicolumn{3}{c}{\text{{\LARGE $M(r)$}}} &\vdots&\multicolumn{3}{c}{\text{{\LARGE $M(r)$}}} &\vdots&\multicolumn{3}{c}{\text{{\LARGE $M(r)$}}} \\
  &       & &0     & &     & &0     &        & & &0     &        & & 
  \end{array}
\right)\end{align}
By the inductive hypothesis, we can decompose into 
$c_{r-2}$ sets of indices of the columns of $M(r)$ each of which consists of three indices and 
$d_{r-2}$ sets of indices of the columns of $M(r)$ each of which consists of four indices. 
Thus, by combining such decompositions of $M(r)$, we can easily obtain the desired decomposition of the columns of $M(r+2)$ 
into $(4c_r+1)$ sets each of which consists of three indices and $4d_r$ sets each of which consists of four indices. 

Let $r$ be even. In the similar way to the case $r$ is odd, we can prove by induction on $r$ that for any even $r \in \Z_{\geq 2}$, we have 
$$C(\Delta(r+2)) \geq 4 \cdot C(\Delta(r))+1 \geq 4 \cdot \frac{2^r-1}{3}+1 = \frac{2^{r+2}-1}{3}.$$
Note that the case $r=0$ is trivial. (Employing the same notation as above, we see that $c_0=1, c_r=4c_{r-2}+1$ for $r \geq 2$ and $d_r=0$ when $r$ is even.) 
It follows from this and Lemma \ref{deruta} that $C(\Delta(r+2))=(2^{r+2}-1)/3$ for the case $r$ is even. 
\end{proof}
Note that $17 \cdot 2^{r-1} -2 \equiv 0 \mod 3$ for odd $r \in \Z_{\geq 1}$ and 
$17 \cdot 2^{r-1} -2 \equiv 2 \mod 3$ for even $r \in \Z_{\geq 2}$. 
Thus, from Lemma \ref{sita}, we see that 
\begin{align*}
C(\Delta(r+2))-d-1+\left\lfloor \frac{17s-4}{6} \right\rfloor \geq \frac{2^{r+2}-2^{r-1}-1}{3} - (2^{r+2}-1)+\frac{17 \cdot 2^{r-1} -2}{3}=0 
\end{align*}
when $r$ is odd, and 
\begin{align*}
C(\Delta(r+2))-d-1+\left\lfloor \frac{17s-4}{6} \right\rfloor = \frac{2^{r+2}-1}{3} - (2^{r+2}-1)+\frac{17 \cdot 2^{r-1} -4}{3}=\frac{2^{r-1}-2}{3} \geq 0
\end{align*}
when $r$ is even with $r \geq 2$. Conjecture \ref{modify} for the case $r=0$ is trivially true. 

Hence, Conjecture \ref{modify} is true for $\Delta(r+2)$ (any lattice simplex of dimension $(4s-2)$ with degree $s$ which is not a lattice pyramid). 
Since $\Delta(r+2)$ is a lattice simplex having a maximal dimension with a given degree, 
the above statements can be one evidence implying that Conjecture \ref{modify} might be true in genera.

\bigskip

%

\end{document}